\newcommand{\abs}[1]{\lvert #1 \rvert}
\newcommand{\bigO}{\mathcal{O}}
\newcommand{\dx}{h}
\newcommand{\dy}{\Delta y}
\newcommand{\dt}{\Delta t}
\newcommand{\gnn}{\gamma^n_0}
\newcommand{\gni}{\gamma^n_i}
\newcommand{\TestCart}{3}
\newcommand{\Testcut}{4}
\newtheorem{assumption}{\textbf{Assumption}}[section]
\newtheorem{Error_sources}{Error sources}[section]
\definecolor{OliveGreen}{rgb}{0,0.39,0}
\definecolor{azure}{rgb}{0.0, 0.5, 1.0}
\newtheorem{theorem}{Theorem}
\newtheorem{lemma}{Lemma}
\newtheorem{remark}{Remark}%
\title{Accuracy analysis for explicit-implicit finite volume schemes on cut cell meshes}
\author{Sandra May\thanks{Department of Inf. Tech., Uppsala University, Box 337, 751 05 Uppsala, Sweden, sandra.may@it.uu.se} \and Fabian Laakmann\thanks{formerly: Mathematical Institute, University of Oxford, OX2 6GG Oxford, United Kingdom, fabian.laakmann@maths.ox.ac.uk}}
\date{}
\begin{document}

\maketitle

\begin{abstract}The solution of time-dependent hyperbolic conservation laws on cut cell meshes causes the small cell problem: standard schemes are not stable on the arbitrarily small cut cells if an explicit time stepping scheme is used and the time step size is chosen based on the size of the background cells. In [J. Sci. Comput. 71, 919–943 (2017)],
  the mixed explicit implicit approach in general and MUSCL-Trap in particular have been introduced to solve this problem by using implicit time stepping on the cut cells. Theoretical and numerical results have indicated that this might lead to a loss in accuracy when switching between the explicit and implicit time stepping. In this contribution we examine this in more detail and will prove in one dimension that the specific combination MUSCL-Trap of an explicit second-order and an implicit second-order scheme results in a fully second-order mixed scheme. As this result is unlikely to hold in two dimensions, we also introduce two new versions of mixed explicit implicit schemes based on exchanging the explicit scheme. We present numerical tests in two dimensions where we compare the new versions with the original MUSCL-Trap scheme.
  \end{abstract}

\section{Introduction}\label{sec: Intro}

Cartesian embedded boundary meshes for computing flow problems involving complex geometries  have become very popular as mesh generation is fully automatic and fairly cheap: the geometry is simply cut out of Cartesian background cells. This results in \textit{cut cells} where the object intersects the background mesh. Cut cells can have various shapes and can in particular be arbitrarily small. This causes various issues when standard methods are used to solve partial differential equations (PDEs) on these meshes. The specific issue depends on the kind of PDE to be solved.

In the context of time-dependent hyperbolic conservation laws the main issue is what is referred to as the \textit{small cell problem}: Typically, explicit time stepping schemes are used. By the CFL condition the time step size is coupled to the size of the cells. One wants to choose the time step based on the size of the larger background cells and use the same time step on the arbitrarily small cut cells. For standard schemes, this approach causes the values on the small cut cells to explode.

One approach to solve this issue is to use \textit{cell merging} or \textit{cell agglomeration}, see, e.g., \cite{Krivodonova2013,Kummer2016,Quirk1994}.
In that approach cut cells that are too small are merged / combined with neighboring cells, which eliminates the problem. The downside is that the complexity is moved back into mesh generation process. The alternative is to develop \textit{algorithmic} solution approaches to overcome the small cell problem. Two very well-established approaches are the \textit{flux redistribution method} \cite{Chern_Colella,Colella2006} and the $h$-box method \cite{Berger_Helzel_Leveque_2003,Berger_Helzel_Leveque_2005,Berger_Helzel_2012}. More recent approaches include the \textit{dimensionally split approach} \cite{Klein_cutcell,Klein_cutcell_3d}, the extension of the active flux method to cut cells \cite{FVCA_Helzel_Kerkmann}, and the \textit{state redistribution scheme} (SRD) \cite{Berger_Giuliani_2021}. All of these schemes are based on finite volume approaches (if one counts the active flux method as a finite volume scheme).
A few years ago, the development of algorithmic solution approaches in the context of discontinuous Galerkin (DG) methods has started. Existing solution approaches here are the \textit{DoD stabilization} \cite{DoD_SIAM_2020,DoD_AMC},
the extension of the \textit{ghost penalty stabilization} \cite{Burman2010} to time-dependent first-order hyperbolic problems
\cite{Kreiss_Fu,2d_ghostpenalty},
as well as the extension of the SRD scheme to a DG setting \cite{Giuliani_DG}.

In this contribution we will consider the \textit{mixed explicit implicit} scheme in more detail, which was introduced by May and Berger \cite{May_PhD, SM_May_Berger_FVCA, May_Berger_explimpl} in the context of a finite volume setting to overcome the small cell problem. The idea is quite simple: cut cells are treated implicitly for stability but cells away from the cut cells use a standard explicit time stepping scheme to keep the cost low. The switch happens by means of \textit{flux bounding}. The authors combined a
second-order explicit scheme with a second-order implicit scheme. Numerical experiments 
\cite{May_Berger_explimpl} have shown that the resulting mixed scheme converges with second order in the $L^1$ norm. In the
$L^{\infty}$ norm, one numerically sees full second order in one dimension but in two and three dimensions orders between 1 and 2 were observed.

In this contribution we want to examine the error behavior of the mixed explicit implicit scheme in more detail and also test possible cures. In one dimension, we will first analyze the one step error and then show that the mixed scheme indeed converges with second order. As numerical results indicate that we will not be able to prove the same result in higher dimensions, we will then introduce two new versions of the mixed explicit implicit scheme with an improved transition error. These will be designed so that on a Cartesian mesh the transition between explicit and implicit time stepping has a third-order one step error. We will include results from a test involving cut cells to see whether this increases the overall accuracy of the scheme. Some of the material presented here has been part of the master thesis of Fabian Laakmann \cite{Laakmann2018}.

The idea of combining explicit and implicit time stepping to gain stability has been used by other authors as well and has in particular become more popular in recent years, see, e.g., 
\cite{Col_Col_Glaz,Mikula_Ohlberger_Urban, MUSCAT2019108883,Frolkovic_2022} and the references cited therein. Here, we will focus on the approach as introduced in \cite{May_PhD, May_Berger_explimpl} in the context of cut cells. We note though that this approach can easily be extended to other problem settings as well. Recently, it has been applied in the context of two phase flow problems: if one uses a sharp interface method, then the creation of new phases (\textit{nucleation} or \textit{cavitation}) results in the creation of new tiny cells. In \cite{May_Thein}, the newly created tiny cells have successfully been treated with the first-order version of the mixed explicit implicit scheme, extended to the isothermal Euler equations.

This contribution is structured as follows: in section \ref{sec: schemes 1d}, we focus on the situation in one dimension. We introduce the mixed explicit implicit scheme MUSCL-Trap as used in \cite{May_Berger_explimpl} and examine its error. We then present two different variants of the mixed explicit implicit scheme, which have a reduced transition error compared to the original scheme. In section \ref{sec: schemes 2d}, we extend the considerations to two dimensions. We first formulate the new variants in two dimensions and then compare the three schemes numerically.
We conclude with an outlook in section \ref{sec: outlook}.

\section{Mixed explicit implicit schemes in 1d}\label{sec: schemes 1d}

We consider the linear advection equation
\begin{equation}\label{eq: lin adv}
s(t,x)_t + u s(t,x)_x = 0, \quad u > 0 \:\: \text{constant},
\end{equation}
with initial data $s(t_0,\cdot) = s_0$.
The standard model mesh for developing cut cell schemes in 1d is shown in
figure \ref{Fig: 1d model problem}: an equidistant mesh with mesh width $\dx$ contains one cell
of length $\alpha \dx$, labeled as cell $0$, in the middle.
Here, $\alpha \in (0,1]$ denotes the {\em volume fraction} -- the ratio of the small cell to full cell volume.
 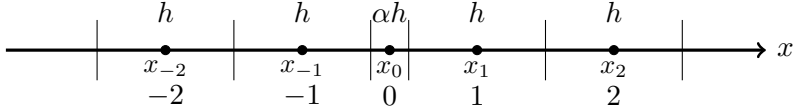
\begin{figure}
\begin{center}
\begin{tikzpicture}[
axis/.style={very thick, line join=miter, ->}]
\draw [axis] (-4.8,0) -- (5.2,0) node(xline)[right] {$x$};
\draw (-3.6,-0.4) -- (-3.6,0.4);
\draw (-1.8,-0.4) -- (-1.8,0.4);
\draw (0,-0.4) -- (0,0.4);
\draw (0.5,-0.4) -- (0.5,0.4);
\draw (2.3,-0.4) -- (2.3,0.4);
\draw (4.1,-0.4) -- (4.1,0.4);
\node[] at (-2.7,-0.6) {$-2$};
\node[] at (-0.9,-0.6) {$-1$};
\node[] at (0.25,-0.6) {$0$};
\node[] at (1.4,-0.6) {$1$};
\node[] at (3.2,-0.6) {$2$};
\node[] at (-2.7,0.5) {$h$};
\node[] at (-0.9,0.5) {$h$};
\node[] at (0.25,0.5) {{$\alpha h$}};
\node[] at (1.4,0.5) {$h$};
\node[] at (3.2,0.5) {$h$};
\draw[fill] (-2.7,0) circle (.06cm);
\draw[fill] (-0.9,0) circle (.06cm);
\draw[fill] (0.25,0) circle (.06cm);
\draw[fill] (1.4,0) circle (.06cm);
\draw[fill] (3.2,0) circle (.06cm);
\node[] at (-2.7,-0.25) {\small $x_{-2}$};
\node[] at (-0.9,-0.25) {\small $x_{-1}$};
\node[] at (0.25,-0.25) {\small $x_{0}$};
\node[] at (1.4,-0.25) {\small $x_{1}$};
\node[] at (3.2,-0.25) {\small $x_{2}$};
\end{tikzpicture}
\end{center}
\caption{1d model problem: Equidistant grid with one small cell of length $\alpha h$ 
labeled as cell 0.}
\label{Fig: 1d model problem}
\end{figure}

\subsection{The MUSCL-Trap scheme}

The idea behind mixed explicit implicit schemes for cut cell meshes as used in \cite{May_PhD, SM_May_Berger_FVCA, May_Berger_explimpl} is to employ an implicit scheme on the cut cell $0$ for stability. Away from the cut cell
an explicit scheme is used to keep the cost low.
In \cite{May_Berger_explimpl}, May and Berger developed what we will refer to as {\em MUSCL-Trap}, the combination of the explicit MUSCL scheme and the implicit Trapezoidal scheme by means of
{\em flux bounding}.

MUSCL (Monotonic Upwind Scheme for Conservation Laws)
\cite{van_Leer_V,colella1985} is a common explicit finite volume scheme.  The scheme is second-order accurate in space and time.  
In one dimension, the MUSCL scheme for the linear advection equation \eqref{eq: lin adv} 
on an equidistant grid is given by
\begin{equation}\label{eq: MUSCL 1d}
S_i^{n+1} = S_i^n - \frac{\dt}{\dx} \left( F_{i+1/2}^{n+1/2,M} - F_{i-1/2}^{n+1/2,M} \right),
\quad
F_{i+1/2}^{n+1/2,M} = u \left( S_i^n + (1 - \lambda) S_{i,x}^n \frac{\dx}{2} \right),
\end{equation}
with $S_{i,x}^n \approx \partial_x s (t^n, x_i)$, computed using 
central difference gradients and
a standard slope limiter
\cite{Leveque02}, with CFL number $\lambda = \tfrac{u \dt}{\dx}$. The scheme is stable for $0 < \lambda \le 1$ and would therefore
become unstable on the cell $0$ for $\alpha < 1$.

The implicit Trapezoidal rule in time in combination with a suitable slope reconstruction in space is given by
\begin{subequations}\label{eq: Trap 1d}
\begin{align}
  S_i^{n+1} &= S_i^n - \frac{\dt}{\dx_i} \left( F_{i+1/2}^{n+1/2,T} - F_{i-1/2}^{n+1/2,T} \right),\\
F_{i+1/2}^{n+1/2,T} &= \frac{u}{2} \left( S_i^n + S_{i,x}^n \frac{\dx_i}{2} + S_i^{n+1} + S_{i,x}^{n+1} \frac{\dx_i}{2}\right), 
\end{align}
\end{subequations}
for a non-equidistant mesh with cells of length $\dx_i$. 

\begin{figure}
\centering
\subfigure[Step 1: Update of explicit cells.]
{
\begin{tikzpicture}[scale=0.8,
axis/.style={very thick, line join=miter, ->}]
\draw [axis] (-6.3,2.2) -- (-6.3,3.9) node(xline)[right] {};
\node[] at (-6.8,2.5) {{\small $t^n$}};
\node[] at (-6.8,3.74) {{\small $t^{n+1}$}};
\draw[dotted] (-6.5,2.5) -- (-3.6,2.5);
\draw (-5.4,2.5) -- (5.9,2.5);
\draw[dotted] (4.1,2.5) -- (6.9,2.5);
\draw (-5.4,2.4) -- (-5.4,2.6);
\draw (-3.6,2.4) -- (-3.6,2.6);
\draw (-1.8,2.4) -- (-1.8,2.6);
\draw (0,2.4) -- (0,2.6);
\draw (0.5,2.4) -- (0.5,2.6);
\draw (2.3,2.4) -- (2.3,2.6);
\draw (4.1,2.4) -- (4.1,2.6);
\draw (5.9,2.4) -- (5.9,2.6);
\node[] at (-4.5,2.1) {{\footnotesize $-3$}};
\node[] at (-2.7,2.1) {{\footnotesize $-2$}};
\node[] at (-0.9,2.1) {{\footnotesize $-1$}};
\node[] at (0.25,2.1) {{\footnotesize $0$}};
\node[] at (1.4,2.1) {{\footnotesize $1$}};
\node[] at (3.2,2.1) {{\footnotesize $2$}};
\node[] at (5.0,2.1) {{\footnotesize $3$}};
\draw[fill] (-4.5,2.5) circle (.08cm);
\draw[fill] (-2.7,2.5) circle (.08cm);
\draw[fill] (-0.9,2.5) circle (.08cm);
\draw[fill] (0.25,2.5) circle (.08cm);
\draw[fill] (1.4,2.5) circle (.08cm);
\draw[fill] (3.2,2.5) circle (.08cm);
\draw[fill] (5.0,2.5) circle (.08cm);
\draw[dotted] (-6.3,3.54) -- (-3.6,3.54);
\draw (-5.4,3.54) -- (5.9,3.54);
\draw[dotted] (4.1,3.54) -- (6.9,3.54);
\draw (-5.4,3.44) -- (-5.4,3.64);
\draw (-3.6,3.44) -- (-3.6,3.64);
\draw (-1.8,3.44) -- (-1.8,3.64);
\draw (0,3.44) -- (0,3.64);
\draw (0.5,3.44) -- (0.5,3.64);
\draw (2.3,3.44) -- (2.3,3.64);
\draw (4.1,3.44) -- (4.1,3.64);
\draw (5.9,3.44) -- (5.9,3.64);
\draw[fill=white] (-4.5,3.54) circle (.12cm);
\draw[fill=white] (-2.7,3.54) circle (.12cm);
\draw[fill=white] (3.2,3.54) circle (.12cm);
\draw[fill=white] (5.0,3.54) circle (.12cm);
\draw[line width = 0.7mm,OliveGreen] (-5.4,2.5) -- (-5.4,3.54);
\draw[line width = 0.7mm,OliveGreen] (-3.6,2.5) -- (-3.6,3.54);
\draw[line width = 0.7mm,OliveGreen] (-1.8,2.5) -- (-1.8,3.54);
\draw[dotted] (0,2.5) -- (0,3.54);
\draw[dotted] (0.5,2.5) -- (0.5,3.54);
\draw[line width = 0.7mm,OliveGreen] (2.3,2.5) -- (2.3,3.54);
\draw[line width = 0.7mm,OliveGreen] (4.1,2.5) -- (4.1,3.54);
\draw[line width = 0.7mm,OliveGreen] (5.9,2.5) -- (5.9,3.54);
\draw[line width = 0.7mm,OliveGreen] (-1.8,2.5) -- (-1.8,3.54);
\end{tikzpicture}
  \label{Fig: switch scheme: done with expl scheme}
}
\subfigure[Step 2: Update of cut cell and transition cells]
{
  \begin{tikzpicture}[scale=0.8,
axis/.style={very thick, line join=miter, ->}]
\draw [axis] (-6.3,-0.3) -- (-6.3,1.4) node(xline)[right] {};
%
%
\node[] at (-6.8,0) {{\small $t^n$}};
\node[] at (-6.8,1.24) {{\small $t^{n+1}$}};
\draw[dotted] (-6.5,0) -- (-3.6,0);
\draw (-5.4,0) -- (5.9,0);
\draw[dotted] (4.1,0) -- (6.9,0);
\draw (-5.4,-0.1) -- (-5.4,0.1);
\draw (-3.6,-0.1) -- (-3.6,0.1);
\draw (-1.8,-0.1) -- (-1.8,0.1);
\draw (0,-0.1) -- (0,0.1);
\draw (0.5,-0.1) -- (0.5,0.1);
\draw (2.3,-0.1) -- (2.3,0.1);
\draw (4.1,-0.1) -- (4.1,0.1);
\draw (5.9,-0.1) -- (5.9,0.1);
\node[] at (-4.5,-0.4) {{\footnotesize $-3$}};
\node[] at (-2.7,-0.4) {{\footnotesize $-2$}};
\node[] at (-0.9,-0.4) {{\footnotesize $-1$}};
\node[] at (0.25,-0.4) {{\footnotesize $0$}};
\node[] at (1.4,-0.4) {{\footnotesize $1$}};
\node[] at (3.2,-0.4) {{\footnotesize $2$}};
\node[] at (5.0,-0.4) {{\footnotesize $3$}};
\draw[fill] (-4.5,0) circle (.08cm);
\draw[fill] (-2.7,0) circle (.08cm);
\draw[fill] (-0.9,0) circle (.08cm);
\draw[fill] (0.25,0) circle (.08cm);
\draw[fill] (1.4,0) circle (.08cm);
\draw[fill] (3.2,0) circle (.08cm);
 \draw[fill] (5.0,0) circle (.08cm);
\draw[dotted] (-6.,1.04) -- (-3.6,1.04);
\draw (-5.4,1.04) -- (5.9,1.04);
\draw[dotted] (4.1,1.04) -- (6.9,1.04);
\draw (-5.4,0.94) -- (-5.4,1.14);
\draw (-3.6,0.94) -- (-3.6,1.14);
\draw (-1.8,0.94) -- (-1.8,1.14);
\draw (0,0.94) -- (0,1.14);
\draw (0.5,0.94) -- (0.5,1.14);
\draw (2.3,0.94) -- (2.3,1.14);
\draw (4.1,0.94) -- (4.1,1.14);
\draw (5.9,0.94) -- (5.9,1.14);
\draw[fill=white] (-4.5,1.04) circle (.12cm);
\draw[fill=white] (-2.7,1.04) circle (.12cm);
\draw[fill=white] (3.2,1.04) circle (.12cm);
\draw[fill=white] (5.0,1.04) circle (.12cm);
\draw[fill=white] (-1.02,0.92) rectangle (-0.78,1.16);
\draw[fill=white] (0.13,0.92) rectangle (0.37,1.16);
\draw[fill=white] (1.28,0.92) rectangle (1.52,1.16);
\draw[dotted] (-5.4,-0) -- (-5.4,1.04);
\draw[dotted] (-3.6,-0) -- (-3.6,1.04);
\draw[line width = 0.7mm,OliveGreen] (-1.8,-0) -- (-1.8,1.04);
\draw[line width = 0.7mm,azure] (0,-0) -- (0,1.04);
\draw[line width = 0.7mm,azure] (0.5,-0) -- (0.5,1.04);
\draw[line width = 0.7mm,OliveGreen] (2.3,-0) -- (2.3,1.04);
\draw[dotted] (4.1,-0) -- (4.1,1.04);
\draw[dotted] (5.9,-0) -- (5.9,1.04);
\draw[line width = 0.7mm,OliveGreen] (-1.8,-0) -- (-1.8,1.04);
\end{tikzpicture}
\label{fig: switch scheme: flux bounding}
}
\caption{Flux bounding: First the cells away from the cut cell are updated using an explicit scheme (edges marked in green). Then the neighborhood of the cut cell is
updated. The Cartesian neighbors of the cut cell are {\em transition cells}. The update on these cells employs both explicit fluxes (edges marked in green) and implicit fluxes (edges marked in light blue), see also \cite{May_Berger_explimpl},\cite[Fig.~6]{FVCA_May}. }
\label{fig: switch schemes}
\end{figure}
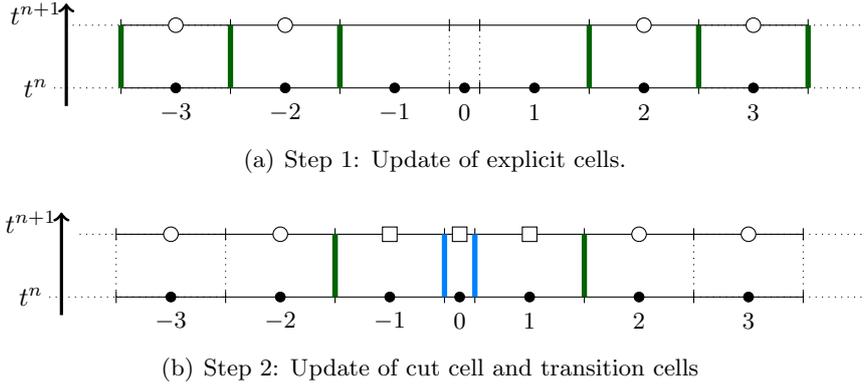
In \cite{May_PhD, May_Berger_explimpl}, May and Berger examined various ways for combining an explicit scheme with an implicit scheme and found {\em flux bounding} to be the best way. The idea is illustrated
in figure \ref{fig: switch schemes}: One first updates all Cartesian cells that are not direct neighbors of the cut cell using
the explicit scheme, see figure \ref{Fig: switch scheme: done with expl scheme}. In a second step, one updates the value
on cells $-1$, $0$, $1$, compare figure \ref{fig: switch scheme: flux bounding}. Here, cell $0$ is treated fully implicitly to guarantee stability on the small cut cell, indicated by the
light blue edges. The cells $-1$ and $1$ are {\em transition cells}:
When updating cell $-2$ in Step 1 with the fully explicit scheme, one assumed that a certain amount of mass, represented by $F_{-3/2}^{n+1/2,M}$,
has left cell $-2$. For the full scheme to be conservative, one needs to ensure that the mass arrives in cell $-1$. This is achieved
by re-using the explicit flux for the edge $-3/2$ in Step 2. Note that {\em transition cells} employ both explicit and implicit fluxes. Cut cells are treated
fully implicitly, and Cartesian cells that are {\em not} neighbors of cut cells are treated fully explicitly. Flux bounding is set up in a symmetric
manner, i.e., it does not distinguish between $u > 0$ and $u < 0$, allowing the extension to more general problems, such as Euler equations \cite{May_Thein}.
Flux bounding has the following properties:
\begin{enumerate}
\item It is conservative (by construction).
\item It satisfies a TVD property in a suitable setting. May and Berger \cite[Thm~1]{May_Berger_explimpl} showed TVD stability for a mixed scheme combining MUSCL (with minmod limiter \cite{Leveque02}) as explicit scheme with
  implicit Euler and piecewise constant data as implicit scheme for $0 < \lambda \le 1$,
  independent of the size of $\alpha \in (0,1]$.
\item It has a natural extension to 2d and 3d (see section \ref{sec: schemes 2d} as well as \cite{May_Berger_explimpl}).
  \end{enumerate}
This results in the following formulae for MUSCL-Trap in 1d:
\begin{equation}
\begin{aligned}\label{eq: mixed scheme in 1d}
\begin{split}
S_{-2}^{n+1} &= S_{-2}^n - \frac{\dt}{\dx}\left[ F_{-3/2}^{n+1/2,M}
- F_{-5/2}^{n+1/2,M} \right], & \text{(MUSCL)}, \\[0.3cm]
S_{-1}^{n+1} &= S_{-1}^n - \frac{\dt}{\dx} \left[ F_{-1/2}^{n+1/2,T}  - F_{-3/2}^{n+1/2,M}  \right],
& \text{(transition)},\\
S_0^{n+1} &= S_0^n - \frac{\dt}{\alpha \dx} \left[ F_{1/2}^{n+1/2,T} - F_{-1/2}^{n+1/2,T}  \right],
& \text{(Trapezoidal)},\\
S_1^{n+1} &= S_1^n - \frac{\dt}{\dx} \left[ F_{3/2}^{n+1/2,M} - F_{1/2}^{n+1/2,T}  \right],
& \text{(transition)},\\[0.3cm]
S_{2}^{n+1} &= S_{2}^n - \frac{\dt}{\dx} \left[ F_{5/2}^{n+1/2,M}
- F_{3/2}^{n+1/2,M}  \right], & \text{(MUSCL)}.
\end{split}
\end{aligned}
\end{equation}
For the slope reconstruction on cells $i \le -2$ and $i \ge 2$ unlimited central differences are used.
On the cut cell $0$ and the transition cells $-1$ and $1$ a least squares approach is employed \cite{Berger_Aftosmis_Murman}.
This extends in a straight-forward way to higher dimensions
\cite{Barth_LS_3d,May_Berger_LP}. 
Note that the least squares slope $S_{1,x}$ enters the computation of the MUSCL
flux $F_{3/2}^{n+1/2}$. Except for this special case, only central difference slopes are used for the computation
of MUSCL fluxes. Throughout this paper we will assume all slopes to be unlimited.

\begin{remark}
  This contribution focuses on the accuracy of mixed schemes in general and the accuracy of MUSCL-Trap in particular. We will not discuss limiting here. For limiting on cut cells, see, e.g., \cite{May_Berger_LP}.
  \end{remark}

\subsubsection{The one step error of MUSCL-Trap}
\label{subsec: one step error MUSCL Trap}

The one step error of scheme \eqref{eq: mixed scheme in 1d} has roughly been analyzed in \cite{May_Berger_explimpl, FVCA_May}. We will do this here more thoroughly.
Let the MUSCL-Trap scheme be given by $S_i^{n+1} = \Phi(S^n,S^{n+1})_i$ with
\begin{equation}\label{eq: def Phi}
\Phi(S^n,S^{n+1})_i = S_i^n - \frac{\dt}{\dx_i} \left[ F_{i+1/2}^{n+1/2,X}(S^n,S^{n+1}) - F_{i-1/2}^{n+1/2,X}(S^n,S^{n+1})  \right]
\end{equation}
with $F_{i+1/2}^{n+1/2,X}$ representing $F_{i+1/2}^{n+1/2,M}$ or $F_{i+1/2}^{n+1/2,T}$, depending on $i$.
For the computation of the one step error, we replace the input arguments of $\Phi_i$ 
with the true cell averages at time $t^n$ and $t^{n+1}$, given by $\bar{s}_i^n$ and $\bar{s}_i^{n+1}$, resulting in
\begin{equation}\label{eq: 1 step error}
  L(\bar{s}^{n},\bar{s}^{n+1})_i = \Phi(\bar{s}^n,\bar{s}^{n+1})_i - \bar{s}_i^{n+1}.
\end{equation}

\begin{remark}
  The {\em one step error} $L(\bar{s}^{n},\bar{s}^{n+1})_i$ measures the error made in one time step $t^n \to t^{n+1}$ on cell $i$.
Typically, i.e., on uniform meshes, the order of convergence of the one step error is one order higher than the overall order of the scheme.
\end{remark}

\begin{figure}
\centering
  \begin{tikzpicture}[scale=0.9,
axis/.style={very thick, line join=miter, ->}]
\draw [axis] (-6.3,-0.3) -- (-6.3,1.4) node(xline)[right] {};
%
%
\node[] at (-6.8,0) {{\small $t^n$}};
\node[] at (-6.8,1.24) {{\small $t^{n+1}$}};
\draw[dotted] (-6.5,0) -- (-3.6,0);
\draw (-5.4,0) -- (5.9,0);
\draw[dotted] (4.1,0) -- (6.9,0);
\draw (-5.4,-0.1) -- (-5.4,0.1);
\draw (-3.6,-0.1) -- (-3.6,0.1);
\draw (-1.8,-0.1) -- (-1.8,0.1);
\draw (0,-0.1) -- (0,0.1);
\draw (0.5,-0.1) -- (0.5,0.1);
\draw (2.3,-0.1) -- (2.3,0.1);
\draw (4.1,-0.1) -- (4.1,0.1);
\draw (5.9,-0.1) -- (5.9,0.1);
\node[] at (-4.5,-0.4) {{\footnotesize $-3$}};
\node[] at (-2.7,-0.4) {{\footnotesize $-2$}};
\node[] at (-0.9,-0.4) {{\footnotesize $-1$}};
\node[] at (0.25,-0.4) {{\footnotesize $0$}};
\node[] at (1.4,-0.4) {{\footnotesize $1$}};
\node[] at (3.2,-0.4) {{\footnotesize $2$}};
\node[] at (5.0,-0.4) {{\footnotesize $3$}};
\draw[fill] (-4.5,0) circle (.08cm);
\draw[fill] (-2.7,0) circle (.08cm);
\draw[fill] (-0.9,0) circle (.08cm);
\draw[fill] (0.25,0) circle (.08cm);
\draw[fill] (1.4,0) circle (.08cm);
\draw[fill] (3.2,0) circle (.08cm);
 \draw[fill] (5.0,0) circle (.08cm);
\draw[dotted] (-6.,1.04) -- (-3.6,1.04);
\draw (-5.4,1.04) -- (5.9,1.04);
\draw[dotted] (4.1,1.04) -- (6.9,1.04);
\draw (-5.4,0.94) -- (-5.4,1.14);
\draw (-3.6,0.94) -- (-3.6,1.14);
\draw (-1.8,0.94) -- (-1.8,1.14);
\draw (0,0.94) -- (0,1.14);
\draw (0.5,0.94) -- (0.5,1.14);
\draw (2.3,0.94) -- (2.3,1.14);
\draw (4.1,0.94) -- (4.1,1.14);
\draw (5.9,0.94) -- (5.9,1.14);
\draw[fill=white] (-4.5,1.04) circle (.12cm);
\draw[fill=white] (-2.7,1.04) circle (.12cm);
\draw[fill=white] (3.2,1.04) circle (.12cm);
\draw[fill=white] (5.0,1.04) circle (.12cm);
\draw[fill=white] (-1.02,0.92) rectangle (-0.78,1.16);
\draw[fill=white] (0.13,0.92) rectangle (0.37,1.16);
\draw[fill=white] (1.28,0.92) rectangle (1.52,1.16);
\draw[line width = 0.7mm,OliveGreen] (-5.4,-0) -- (-5.4,1.04);
\draw[line width = 0.7mm,OliveGreen] (-3.6,-0) -- (-3.6,1.04);
\draw[line width = 0.7mm,OliveGreen] (-1.8,-0) -- (-1.8,1.04);
\draw[line width = 0.7mm,azure] (0,-0) -- (0,1.04);
\draw[line width = 0.7mm,azure] (0.5,-0) -- (0.5,1.04);
\draw[line width = 0.7mm,OliveGreen] (2.3,-0) -- (2.3,1.04);
\draw[line width = 0.7mm,OliveGreen] (4.1,-0) -- (4.1,1.04);
\draw[line width = 0.7mm,OliveGreen] (5.9,-0) -- (5.9,1.04);
\node[] at (-5.4,-0.4) {{\footnotesize \textcolor{OliveGreen}{M}}};
\node[] at (-3.6,-0.4) {{\footnotesize \textcolor{OliveGreen}{M}}};
\node[] at (-1.8,-0.4) {{\footnotesize \textcolor{OliveGreen}{M}}};
\node[] at (-1.8,-0.4) {{\footnotesize \textcolor{OliveGreen}{M}}};
\node[] at (0,-0.4) {{\footnotesize \textcolor{azure}{T}}};
\node[] at (0.5,-0.4) {{\footnotesize \textcolor{azure}{T}}};
\node[] at (2.3,-0.4) {{\footnotesize \textcolor{OliveGreen}{M}}};
\node[] at (4.1,-0.4) {{\footnotesize \textcolor{OliveGreen}{M}}};
\node[] at (5.9,-0.4) {{\footnotesize \textcolor{OliveGreen}{M}}};
\node[] at (-4.5,0.5) {{\footnotesize $\mathcal{O}(h^3)$}};
\node[] at (-2.7,0.5) {{\footnotesize $\mathcal{O}(h^3)$}};
\node[] at (-0.9,0.5) {{\footnotesize $\mathcal{O}(h^2)$}};
\node[] at (0.25,0.5) {{\footnotesize $\mathcal{O}(h^2)$}};
\node[] at (1.4,0.5) {{\footnotesize $\mathcal{O}(h^2)$}};
\node[] at (3.2,0.5) {{\footnotesize $\mathcal{O}(h^2)$}};
\node[] at (5.0,0.5) {{\footnotesize $\mathcal{O}(h^3)$}};
\draw[line width = 0.7mm,OliveGreen] (-1.8,-0) -- (-1.8,1.04);
\end{tikzpicture}
\caption{The one step error $\abs{L(\bar{s}^{n},\bar{s}^{n+1})_i}$ for the MUSCL-Trap scheme for $i=-3,\ldots,3$. }
\label{fig: 1 step error}
\end{figure}
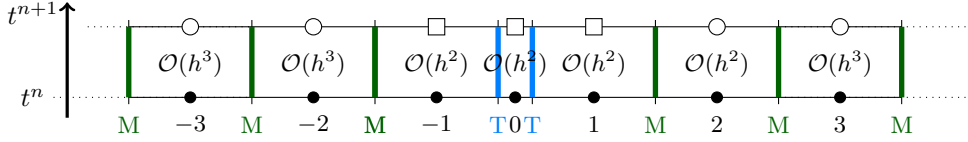

The numerically observed convergence order of the one step error for a smooth test problem is summarized in figure \ref{fig: 1 step error}.
We observe
a third-order one step error on cells that are sufficiently far away from the cut cell. These cells use the full MUSCL scheme on a uniform
mesh. 
In the neighborhood of the cut cell, we observe a second-order
one step error.
This is due to the following four error sources.
\begin{Error_sources}\label{error_sources}
\begin{itemize}
\item[{\textbf (1)}] The switch in the time stepping scheme between MUSCL and Trapezoidal rule results in an error term of size
  $\bigO(\dx^2)$. The examination of its propagation is one key aspect of this research.
\item[{\textbf (2)}] The irregularity of the cut cell causes several errors
  if $\alpha < 1$: 
  \begin{itemize}
  \item[(a)] the slope reconstruction, which uses a least squares approach, is only of first order, resulting in second-order error terms that we summarize in $a_i\dx^2$ for cell $i$; (this is the only reason for the error
    of size $\bigO(\dx^2)$ on cell $2$;)
  \item[(b)] in the computation of the fluxes, we use the variable $S_i^n$ (which is supposed to be an approximation to the cell average $\bar{s}_i^n$) as approximation to the point value
    $s(t^n,x_i)$; this results in second-order error terms that we summarize in $b_i\dx^2$;
  \item[(c)] for a third-order one step error, we would need to use a quadratic reconstruction in space instead of a linear one;
    we summarize the resulting error terms on cell $i$ in $c_i\dx^2$.
    \end{itemize}
  \end{itemize}
\end{Error_sources}
We note that on an equidistant mesh, i.e., $\alpha=1$, all error terms listed under (2) reduce to third-order errors, partly due
to cancellation effects with neighboring cells.
Using this notation, we observe on cells $-1$, $0$, and $1$ the following errors:
\begin{subequations}\label{eq: 1step central}
\begin{align}
L(\bar{s}^n,\bar{s}^{n+1})_{-1}&= \frac{1}{4}(1-\lambda)\lambda^2\dx^2 s_{xx}(t^n,x_{-1}) + a_{-1}\dx^2 + b_{-1}\dx^2 + c_{-1}\dx^2 + \bigO(\dx^3), \label{eq: 1step central -1}\\
L(\bar{s}^n,\bar{s}^{n+1})_{0} &=a_{0}\dx^2 + b_{0}\dx^2 + c_{0}\dx^2 + \bigO(\dx^3), \label{eq: 1step central }\\
  L(\bar{s}^n,\bar{s}^{n+1})_{1} &=- \frac{1}{4}(1-\lambda)\lambda^2\dx^2 s_{xx}(t^n,x_{1}) + a_{1}\dx^2 + b_{1}\dx^2 + c_{1}\dx^2 + \bigO(\dx^3).
                     \label{eq: 1step central 1}
\end{align}
\end{subequations}
Note that the error that we make when switching from MUSCL to Trapezoidal on cell $-1$ is to leading order the
negative of the error that we make when switching back from Trapezoidal to MUSCL on cell $1$.

  \subsubsection{Numerical results for MUSCL-Trap}
  \label{subsec: num results MUSCL-Trap 1d}

  In \cite{May_Berger_explimpl}, the authors presented numerical results for the model problem shown in
  figure \ref{Fig: 1d model problem}, which we will start with as well to keep the presentation mostly self-contained.
  
  {\textbf{Test 1:}} Consider the model problem shown in figure \ref{Fig: 1d model problem}.
  One cut cell is located in the interval $[0.5,0.5+\alpha \dx]$. The overall
  domain length is given by $[0,1+\alpha \dx]$. Periodic boundary conditions are used. The test function is
  $\sin\left( \frac{2 \pi (x+0.36)}{1 + \alpha \dx}  \right)$, $\alpha = 10^{-4}$, and final time is
  $T =  (1+\alpha \dx)$ so that the test function is back to its original position.
  The $L^1$ error has been normalized to account for the changing domain length.
  We solve $s_t + s_x = 0$ with $\lambda = 0.8$, independent of $\alpha$.
  The results for the one step error and the error at time $T$ are shown in Table \ref{Table: MUSCL-Trap 1d old model problem}.

  \begin{table}
    \begin{center}
    {\small
    \caption{Result of \textbf{Test 1}: MUSCL-Trap for the model problem in Fig. \ref{Fig: 1d model problem}.}
\label{Table: MUSCL-Trap 1d old model problem}
\begin{tabular}{crcccccc}
\hline 
Final time & $\dx$ & $L^1$ error & order & $L^{\infty}$ error & order \\
\hline
\\[-0.2cm]
1 step error& 1/160 	 & 2.95e-06 & -- & 5.52e-04 & -- \\
& 1/320 	 & 3.65e-07 & 3.01 & 1.36e-04 & 2.02 \\
& 1/640 	 & 4.55e-08 & 3.01 & 3.36e-05 & 2.01 \\ [0.1cm]
1 period error     & 1/160 	 & 5.68e-05 & -- & 3.51e-04 & -- \\
& 1/320 	 & 1.48e-05 & 1.94 & 7.41e-05 & 2.24 \\
& 1/640 	 & 3.77e-06 & 1.97 & 1.77e-05 & 2.07 \\[0.05cm]

\hline
\end{tabular}
}   
\end{center}
\end{table}

For the one step error in the $L^{\infty}$ norm we observe second-order convergence,
as expected. The third-order
convergence in the $L^{1}$ norm can be explained by a simple counting argument: let $M$ denote the number of Cartesian cells. Then, there are
$M\text{-}3$ cells (out of $M\text{+}1$ total cells) with an error of size $\bigO(\dx^3)$ and only 4 cells with an error of size $\bigO(\dx^2)$.
This results in the third-order convergence in the $L^{1}$ norm.

The results for the error at time $T$ are more surprising: we observe full second-order convergence in the $L^1$ and $L^{\infty}$ norm
despite the one step error only converging with second order in $L^{\infty}$.
However, there is a reasonable explanation for this behavior. Let us consider error accumulation from a Lagrangian view point: let us
fix one cell, e.g., the cell that contains the peak of the sine curve, put a tracer there, and follow the tracer during the simulation.
If we assume that our scheme is stable (and that error propagates with norm smaller/equal than 1),
the error at time $T$ essentially corresponds to the sum of all the one step errors of all the cells that our tracer has passed.
Given that most cells have a one step error of third order and that there are only 4 isolated cells with a one step error of second order,
this consideration actually implies to expect a second-order error on all cells at time $T$. This is consistent with the numerical
results.

This consideration motivates the following new test, \textbf{Test 2}, which we expect to be more challenging:
Instead of using only one cut cell, we double the number of cut cells as we halve the grid size $\dx$. Then, the number
of cells with one step error $\bigO(\dx^2)$ increase with $\bigO(\frac{1}{\dx})$ and should affect the convergence order at time $T$.

\begin{figure}[b]
  \centering
  \begin{tikzpicture}[scale=0.75]
    \node[] at (-2,0) {{\small $L=1,K=6$:}};
    \draw (0,0) -- (12.4,0);
    \node[] at (1,0.2) {{\footnotesize $h$}};    
    \draw (0,-0.2) -- (0,0.2);
    \draw (2,-0.2) -- (2,0.2);
    \draw (4,-0.2) -- (4,0.2);
    \draw[azure] (6,-0.2) -- (6,0.2);
    \draw[azure] (6.4,-0.2) -- (6.4,0.2);
    \draw (8.4,-0.2) -- (8.4,0.2);
    \draw (10.4,-0.2) -- (10.4,0.2);
    \draw (12.4,-0.2) -- (12.4,0.2);
     \node[] at (-2,-1) {{\small $L=2,K=6$:}};
    \draw (0,-1) -- (12.4,-1);
    \node[] at (0.5,-0.8) {{\footnotesize $h/2$}};    
    \draw (0,-1.2) -- (0,-0.8);
    \draw (1,-1.2) -- (1,-0.8);
    \draw (2,-1.2) -- (2,-0.8);
    \draw[azure] (3,-1.2) -- (3,-0.8);
    \draw[azure] (3.2,-1.2) -- (3.2,-0.8);
    \draw (4.2,-1.2) -- (4.2,-0.8);
    \draw (5.2,-1.2) -- (5.2,-0.8);
    \draw[line width = 0.5mm] (6.2,-1.2) -- (6.2,-0.8);
    \draw (7.2,-1.2) -- (7.2,-0.8);
    \draw (8.2,-1.2) -- (8.2,-0.8);
    \draw[azure] (9.2,-1.2) -- (9.2,-0.8);
    \draw[azure] (9.4,-1.2) -- (9.4,-0.8);
    \draw (10.4,-1.2) -- (10.4,-0.8);
    \draw (11.4,-1.2) -- (11.4,-0.8);
    \draw (12.4,-1.2) -- (12.4,-0.8);
    %
     \node[] at (-2,-2) {{\small $L=4,K=6$:}};
    \draw (0,-2) -- (12.4,-2);
    \node[] at (0.25,-1.8) {{\footnotesize $h/4$}};    
    \draw (0,-2.2) -- (0,-1.8);
    \draw (0.5,-2.2) -- (0.5,-1.8);
    \draw (1,-2.2) -- (1,-1.8);
    \draw[azure] (1.5,-2.2) -- (1.5,-1.8);
    \draw[azure] (1.6,-2.2) -- (1.6,-1.8);
    \draw (2.1,-2.2) -- (2.1,-1.8);
    \draw (2.6,-2.2) -- (2.6,-1.8);
    \draw[line width = 0.5mm] (3.1,-2.2) -- (3.1,-1.8);
    \draw (3.6,-2.2) -- (3.6,-1.8);
    \draw (4.1,-2.2) -- (4.1,-1.8);
    \draw[azure] (4.6,-2.2) -- (4.6,-1.8);
    \draw[azure] (4.7,-2.2) -- (4.7,-1.8);
    \draw (5.2,-2.2) -- (5.2,-1.8);
    \draw (5.7,-2.2) -- (5.7,-1.8);
    \draw[line width = 0.5mm] (6.2,-2.2) -- (6.2,-1.8);
    \draw (6.7,-2.2) -- (6.7,-1.8);
    \draw (7.2,-2.2) -- (7.2,-1.8);
    \draw[azure] (7.7,-2.2) -- (7.7,-1.8);
    \draw[azure] (7.8,-2.2) -- (7.8,-1.8);
    \draw (8.3,-2.2) -- (8.3,-1.8);
    \draw (8.8,-2.2) -- (8.8,-1.8);
    \draw[line width = 0.5mm] (9.3,-2.2) -- (9.3,-1.8);
    \draw (9.8,-2.2) -- (9.8,-1.8);
    \draw (10.3,-2.2) -- (10.3,-1.8);
    \draw[azure] (10.8,-2.2) -- (10.8,-1.8);
    \draw[azure] (10.9,-2.2) -- (10.9,-1.8);
    \draw (11.4,-2.2) -- (11.4,-1.8);
    \draw (11.9,-2.2) -- (11.9,-1.8);
    \draw (12.4,-2.2) -- (12.4,-1.8);
    \end{tikzpicture}
  \caption{New 1d model problem: The grid consists of $L$ blocks with $K+1$ cells that each contain $K$ cells of length $H$ (here $H=h,\frac h 2, \frac h 4$) and one cut cell of length $\alpha H$ in the middle.
   When the mesh is refined by a factor of 2, we keep $K$ (and $\alpha$) fixed but double $L$. Analogous to figure \ref{Fig: 1d model problem}, we refer to cut cells as (cells of type) 0, to the left Cartesian neighbors of cut cells as (cells of type) -1, and to the right Cartesian neighbor as (cells of type) 1. }
\label{Fig: new 1d model problem}
\end{figure}
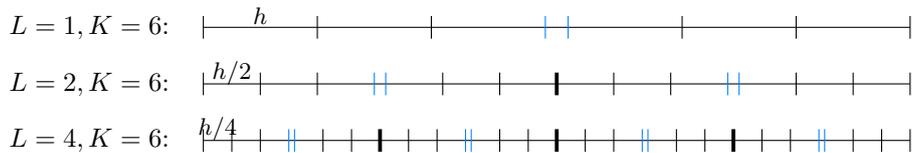

\textbf{Test 2:} The test setup is shown in figure \ref{Fig: new 1d model problem}. We use $K=40$ and $L=4$ on the coarsest mesh.
  The overall
  domain length is given by $[0,1+4\alpha \dx_0]$ with $\dx_0$ denoting the coarsest mesh (in our case $\dx_0=\frac{1}{160}$).
  Different to \textbf{Test 1}, the domain length stays constant.
  The test function is
  $\sin\left( \frac{2 \pi (x+0.36)}{1 + 4\alpha \dx_0}  \right)$, $\alpha = 10^{-4}$, and
  $T = (1+4\alpha \dx_0)$ so that the test function is back to its original position.
  The results for the one step error and the error at time $T$ are shown in Table \ref{Table: MUSCL-Trap 1d new model problem}.

  \begin{table}
    \begin{center}
 {\small
    \caption{Result of \textbf{Test 2}: Error for MUSCL-Trap for the model problem shown in Fig. \ref{Fig: new 1d model problem}.}
\label{Table: MUSCL-Trap 1d new model problem}
\begin{tabular}{crcccccc}
\hline 
Final time & $\dx$ & $L^1$ error & order & $L^{\infty}$ error & order \\
\hline
\\[-0.2cm]
1 step error & 1/160 	 & 7.81e-06 & -- & 6.86e-04 & -- \\
& 1/320 	 & 2.22e-06 & 1.82 & 1.67e-04 & 2.04 \\
& 1/640 	 & 5.40e-07 & 2.04 & 4.31e-05 & 1.95 \\[0.1cm]
1 period error   & 1/160 	 & 4.76e-05 & -- & 5.10e-04 & -- \\
& 1/320 	 & 1.15e-05 & 2.05 & 1.06e-04 & 2.27 \\
& 1/640 	 & 2.90e-06 & 1.98 & 2.27e-05 & 2.23 \\  [0.05cm]
\hline
\end{tabular}  }   
\end{center}
\end{table}

Despite the one step error being only of second order in both the $L^1$ and $L^{\infty}$ norm, the error after one period also converges
with second order both in $L^1$ and $L^{\infty}$. This behavior of the error accumulation behaving differently on non-uniform meshes has been observed and analyzed before
\cite{Wendroff_White}. We will follow the proof from \cite{Wendroff_White} to show second order accuracy for MUSCL-Trap in theorem 
\ref{thm: 2nd order MUSCl-Trap} below. Similar ideas have also been used to show the second order accuracy of the $h$-box method \cite{Berger_Helzel_Leveque_2003}.

Different to \textbf{Test 1}, we observe for the one step error measured in the $L^1$ norm, a convergence order of 2 (compared to 3 before).
This can be explained as follows: let $e_i =  S_i^n - \bar{s}_i^n$ denote the error on cell $i$, let $K\text{+}1$ be the number of cells per block and let there be
$L$ blocks. As before, $M$ denotes the total number of Cartesian cells. This implies
$M+L = L \cdot (K+1)$. Then, the $L^1$ error can be computed as
(with $c$ denoting a generic constant)
\begin{align*}
\sum_{\text{cells}} \dx_i \cdot \abs{e_i} 
&= \sum_{\substack{\text{`normal'} \\ \text{cells}}} \dx \cdot \bigO(\dx^3) + \sum_{\substack{\text{cells} \\ -1,1,2}} \dx \cdot \bigO(\dx^2) 
+ \sum_{\text{cell } 0} \alpha \dx \cdot \bigO(\dx^2) \\
&= (M-3L) \cdot c\dx^4 + 3L \cdot c\dx^3 + L \cdot \alpha c\dx^3.
\end{align*}
Using $L = \frac{M}{K}$ and $M = \mathcal{O}(\frac{1}{\dx})$, we get
\begin{align*}
\sum_{\text{cells}} \dx_i \cdot \abs{e_i} 
&= \left( 1 - \frac{3}{K} \right) \cdot c\dx^3
+ 3 \frac{1}{K} \cdot c\dx^2 + \frac{1}{K} \cdot \alpha c\dx^2.
\end{align*}
Therefore, for keeping $K$ (the number of cells per block) fixed, we get an $L^1$ error of $\bigO(\dx^2)$. This theoretical consideration
coincides with our numerical observation.
Note that in \textbf{Test 1}, $K$ increased with $\bigO(\frac{1}{\dx})$, explaining the better convergence rate of 3 in the $L^1$ norm for the one step error.

  \subsubsection{Proof for MUSCL-Trap being second-order accurate}
  \label{subsec: proof Wendroff white}

For the proof of second-order convergence, we require stability of the method. Examining this stability mathematically is very challenging
due to using a
non-uniform mesh and a mixed scheme. Applying, for example, von Neumann stability analysis is not feasible.
We therefore formulate this as assumption \ref{Ass: Stab} in the proof below. In our numerical tests, we found that this was true for the $L^1$ and $L^{\infty}$ norm.
\begin{theorem}\label{thm: 2nd order MUSCl-Trap}
  Let Ass. \ref{Ass: Stab} below hold true. Consider the MUSCL-Trap scheme given by \eqref{eq: mixed scheme in 1d}, but assume that slopes are computed by means of an unlimited forward difference quotient. Let $0 < \lambda \le 1$ independent of $\alpha$. Then
  the scheme is second-order accurate with respect to the norm used in Ass. \ref{Ass: Stab}
  for the linear advection equation for model problem
  \ref{Fig: new 1d model problem} for sufficiently smooth initial data $s_0$.
  \end{theorem}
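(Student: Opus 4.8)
The plan is to follow the Wendroff--White strategy \cite{Wendroff_White} for establishing second-order accuracy on non-uniform meshes in the presence of isolated ``bad'' cells whose one step error is only $\bigO(\dx^2)$. The starting point is the error equation: writing $e_i^n = S_i^n - \bar{s}_i^n$ and subtracting \eqref{eq: 1 step error} from the scheme \eqref{eq: def Phi}, linearity of $\Phi$ in the advection setting gives $e^{n+1} = \Phi(e^n, e^{n+1}) - L(\bar{s}^n, \bar{s}^{n+1})$, i.e., $e^{n+1} = \mathcal{A}\, e^n - \tau^n$ where $\mathcal{A}$ is the (time-step) solution operator of the mixed linear scheme and $\tau^n$ collects the one step errors. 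The stability assumption \ref{Ass: Stab} supplies $\|\mathcal{A}\| \le 1$ in the chosen norm, so a naive telescoping gives only $\|e^N\| \le \sum_n \|\tau^n\| = \bigO(\dx^{-1}) \cdot \bigO(\dx^2) = \bigO(\dx)$ in the worst case (because $\bigO(1/\dx)$ cells have a second-order one step error when $K$ is fixed). This is the first-order bound one wants to beat.

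The key idea is to not estimate $\sum_n \mathcal{A}^{N-1-n}\tau^n$ term by term, but to split $\tau^n$ into a ``good'' part of size $\bigO(\dx^3)$ (the generic cells, which by the counting argument already contribute $\bigO(\dx^2)$ after summation) and a ``bad'' part supported on the transition/cut cells of type $-1$, $0$, $1$, and then to absorb the bad part into a telescoping/discrete-integration-by-parts argument. Concretely, I would look for an auxiliary grid function $\psi^n$ — a ``corrector'' that is $\bigO(\dx)$ (not $\bigO(\dx^2)$) but supported only near the bad cells — such that $\psi^{n+1} - \mathcal{A}\psi^n$ reproduces the bad part of $\tau^n$ up to an $\bigO(\dx^3)$-per-cell remainder. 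Then $\tilde{e}^n := e^n + \psi^n$ satisfies $\tilde{e}^{n+1} = \mathcal{A}\tilde{e}^n - \tilde{\tau}^n$ with $\|\tilde{\tau}^n\|$ now genuinely $\bigO(\dx^2)$ after summation over all cells and all time steps, giving $\|\tilde{e}^N\| = \bigO(\dx^2)$, hence $\|e^N\| \le \|\tilde{e}^N\| + \|\psi^N\| = \bigO(\dx^2) + \bigO(\dx)$ — which is still only first order unless $\psi^N$ itself is small. The resolution, as in \cite{Wendroff_White}, is that the leading bad one step errors on cells $-1$ and $1$ are negatives of each other to leading order (this is exactly the cancellation noted after \eqref{eq: 1step central}, $L_{-1} \approx \tfrac14(1-\lambda)\lambda^2\dx^2 s_{xx}$ versus $L_1 \approx -\tfrac14(1-\lambda)\lambda^2\dx^2 s_{xx}$), so the corrector can be taken in ``dipole'' form whose net effect telescopes: the tracer-style Lagrangian accumulation of a $+\bigO(\dx^2)$ error immediately followed by a $-\bigO(\dx^2)$ error contributes only $\bigO(\dx^3)$ per passage, not $\bigO(\dx^2)$. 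Making this precise means choosing $\psi^n$ so that $\psi^N$ is $\bigO(\dx^2)$ because the dipole contributions cancel in the norm, while only the uncancelled remainders (the $a_i, b_i, c_i$ terms, which on cells of type $2$ and the cut cell are genuinely $\bigO(\dx^2)$ but occur only $\bigO(1/\dx)$ times with cell width $\dx$ or $\alpha\dx$, contributing $\bigO(\dx^2)$ to an $L^1$-type norm and needing separate handling for $L^\infty$) survive.

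I would organize the write-up as: (i) set up the linear error recursion and invoke Ass.~\ref{Ass: Stab}; (ii) record the refined one step error expansions \eqref{eq: 1step central} and the analogous third-order statements on generic cells, emphasizing the sign cancellation between cells $-1$ and $1$; (iii) construct the corrector $\psi^n$ explicitly, most cleanly by defining it through the exact solution's local Taylor data so that $\psi^{n+1} - \mathcal{A}\psi^n$ matches the $s_{xx}$-dipole part of $\tau^n$; (iv) derive the recursion for $\tilde{e}^n = e^n + \psi^n$ and bound $\sum_n \|\tilde{\tau}^n\|$ using the counting argument (distinguishing the $L^1$ and $L^\infty$ cases as in the discussion around Tables~\ref{Table: MUSCL-Trap 1d old model problem}--\ref{Table: MUSCL-Trap 1d new model problem}); (v) bound $\|\psi^N\|$ and conclude $\|e^N\| = \bigO(\dx^2)$. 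I expect the main obstacle to be step (iii): constructing a corrector that is simultaneously (a) localized near the bad cells, (b) mapped by $\mathcal{A}$ in a way that cancels the leading bad one step error up to $\bigO(\dx^3)$, and (c) small at the final time in the relevant norm — the mixed explicit/implicit, non-uniform structure of $\mathcal{A}$ (MUSCL away from the cut cell, Trapezoidal on it, transition fluxes in between) makes the action of $\mathcal{A}$ on a localized bump nontrivial to control, and one must check that the corrector does not ``smear'' into an $\bigO(\dx)$ tail. A secondary technical point is justifying that Ass.~\ref{Ass: Stab} with norm bound exactly $1$ (rather than $1 + \bigO(\dt)$) is what is available, or adapting the telescoping if only a Lax-Richtmyer-type bound holds; and, for the $L^\infty$ statement, ensuring the surviving $a_i,b_i,c_i$ contributions on the isolated cells really do propagate as $\bigO(\dx^2)$ rather than accumulating, which again leans on stability and the isolation (finitely many bad cells per block).
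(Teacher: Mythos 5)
Your high-level strategy is exactly the one the paper uses: the Wendroff--White device of a corrector grid function $\bar{w}^n=\bar{s}^n+\gamma^n$ whose one step error is $\bigO(\dx^3)$ on \emph{every} cell, combined with Ass.~\ref{Ass: Stab}, the error recursion, and the triangle inequality. But the proposal stops exactly where the actual proof begins: the explicit construction of the corrector, which the paper itself calls ``the essence of this proof,'' is deferred as ``the main obstacle,'' and the sketch you give of how it should look would not close. First, you never use the forward-difference hypothesis in the theorem statement and instead reason from the central-difference expansion \eqref{eq: 1step central}; the relevant expansions are \eqref{eq: 1step forward}, and the paper explicitly remarks that for central differences no suitable corrector was found. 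The forward differences are what decouple the cut-cell neighborhood from cells $\pm 2$ and make a localized corrector possible at all. Second, you envisage a ``dipole'' corrector supported on the transition cells $\pm 1$; the paper's corrector \eqref{eq: def w alpha != 1}--\eqref{eq: def gnn} is instead a single $\bigO(\dx^2)$ perturbation $\gnn$ placed on the cut cell $0$ alone, chosen so that one application of the scheme produces $-\tfrac{\lambda}{2\beta}\gnn$ on cell $-1$ and $+\tfrac{\lambda}{2\beta}\gnn$ on cell $1$, cancelling \eqref{eq: 1step forward -1} and \eqref{eq: 1step forward 1} while leaving cell $0$ at $\bigO(\dx^3)$. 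A perturbation placed on cells $\pm 1$ themselves would enter the explicit fluxes $F^{n+1/2,M}_{-3/2}$ and $F^{n+1/2,M}_{3/2}$ through the slopes of cells $-2$ and $1$ and thus create new uncancelled $\bigO(\dx^2)$ one step errors on cells $\pm 2$ --- precisely the ``smearing'' you worry about; the cure is to move the perturbation onto the cut cell.

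Third, and most seriously, you propose to let the irregularity terms ($a_i,b_i,c_i$) ``survive'' and be ``handled separately.'' That cannot work: with $\bigO(1/\dx)$ such cells (one block per $K$ cells, $K$ fixed) each carrying an unabsorbed $\bigO(\dx^2)$ one step error at each of $\bigO(1/\dx)$ time steps, the stability bound alone yields only $\lVert e^N\rVert=\bigO(\dx)$ in both $L^1$ and $L^\infty$, i.e.\ first order. The theorem holds because in the forward-difference setting nothing is left over: the cut cell's own one step error cancels to $\bigO(\dx^3)$ (see \eqref{eq: 1step forward 0}), and the remaining second-order terms on cells $\pm 1$ --- irregularity and transition error together --- are all absorbed into the single coefficient $\gnn$ of \eqref{eq: def gnn}. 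A minor additional point: the corrector needed is $\bigO(\dx^2)$, not $\bigO(\dx)$, so condition \eqref{eq: condition WW 1} and the smallness of $\lVert\gamma^N\rVert$ are immediate and no final-time cancellation argument of the kind you anticipate is required.
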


  \begin{proof}
    For the proof, we use an idea that goes back to Wendroff and White \cite{Wendroff_White}, and which has also been used to show second-order accuracy of the $h$-box method \cite{Berger_Helzel_Leveque_2003}:
Assume that we are able to construct a grid function with cell averages $\bar{w}_i^n$ such that (i) the new grid function $\bar{w}_i^n$ is sufficiently close to $\bar{s}_i^n$ and (ii) the one step error is of third order for all cells. To be precise, $\bar{w}_i^n$ is supposed to satisfy
    \begin{subequations}\label{eq: condition WW}
  \begin{alignat}{2}
&\text{(i)} \  &&\bar{w}^n_i=\bar{s}^n_i + \bigO(\dx^2) \quad \forall \: i,n,  \label{eq: condition WW 1} \\
&\text{(ii)} \  &&\abs{L(\bar{w}^{n},\bar{w}^{n+1})_i} =  \bigO(\dx^3)  \quad \forall \: i,n. \label{eq: condition WW 2}
\end{alignat}
\end{subequations}
 \begin{assumption}\label{Ass: Stab}
    We assume that the MUSCL-Trap scheme is stable with respect to the norm $\lVert \cdot \rVert$ in the
    following sense: 
    there holds
    \[
      \lVert \Phi(S^{n-1} - \bar{w}^{n-1},S^{n} - \bar{w}^n)\rVert \le \lVert S^{n-1} - \bar{w}^{n-1}\rVert + \mathcal{O}(h^3) \quad \: \forall \: 1 \le n \le N
      \]
for the error propagation.
\end{assumption}
For the error grid function $E^n = S^n - \bar{w}^{n}$ there holds by linearity of $\Phi$
\begin{align*}
  S^n - \bar{w}^{n} &= \Phi(S^{n-1},S^{n}) - \bar{w}^{n}
  = \Phi(E^{n-1}+\bar{w}^{n-1},E^{n}+\bar{w}^{n}) - \bar{w}^{n}\\
  &= \Phi(E^{n-1},E^{n}) + \Phi(\bar{w}^{n-1},\bar{w}^{n}) - \bar{w}^{n}
  = \Phi(E^{n-1},E^{n}) + L(\bar{w}^{n-1},\bar{w}^{n}).
\end{align*}
  From Ass. \ref{Ass: Stab} and assumption \eqref{eq: condition WW 2}, we can conclude
    \begin{align*}
      \lVert S^N - \bar{w}^N\rVert
      &\le \lVert \Phi(S^{N-1} - \bar{w}^{N-1},S^{N} - \bar{w}^N) \rVert + \lVert L(\bar{w}^{N-1},\bar{w}^{N}) \rVert \\
      &\le \lVert S^{N-1} - \bar{w}^{N-1} \rVert + \bigO(\dx^3)
    \le \lVert S^0 - \bar{w}^0\rVert + \frac{T}{\dt}\bigO(\dx^{3})=\bigO(\dx^2)
\end{align*}  
with $N = \frac{T}{\dt}$ and $\dt = \bigO(\dx)$. In other words: we have `normal' error accumulation with respect to the new solution $\bar{w}^{n}$.
Together with property \eqref{eq: condition WW 1} we get by means of the triangle inequality
\begin{equation*}
\lVert S^N-\bar{s}^N\rVert \le \lVert S^N-\bar{w}^N\rVert + \lVert \bar{w}^N-\bar{s}^N\rVert = \bigO(\dx^2) + \bigO(\dx^2) = \bigO(\dx^2),
\end{equation*}
which implies global, second-order convergence with respect to the true solution $\bar{s}$.

It remains to find such a suitable grid function $\bar{w}^{n}$. We note that this is the essence of this proof.

\textbf{Case $\alpha=1$:} We first consider the case of a uniform mesh. The main error source that we like to examine is the
switch in time stepping. Note though that different to \eqref{eq: 1step central}, we now assume that forward differences
are used. Therefore, the actual error terms look slightly different and are given by
  \begin{align*}
L(\bar{s}^{n},\bar{s}^{n+1})_{-1} &=
-\frac{1}{4}\lambda^3\dx^2 s_{xx}(t^n,x_{-1}) + \bigO(\dx^3), \quad
L(\bar{s}^{n},\bar{s}^{n+1})_{0} = \bigO(\dx^3), \\
L(\bar{s}^{n},\bar{s}^{n+1})_{1} &=
  \frac{1}{4}\lambda^3\dx^2 s_{xx}(t^n,x_{1}) + \bigO(\dx^3).
    \end{align*}
We define
\begin{equation}\label{eq: def w alpha=1}
  \bar{w}^n_i = \bar{s}^n_i + \gni \quad \text{ with } \quad
  \gni= \begin{cases}
    - \frac{1}{2}\lambda^2 \dx^2 s^n_{xx}(t^n,x_0) & \text{ for } i=0,\\
    0 & \text{ otherwise}.\\
    \end{cases}
  \end{equation}
  A direct computation then shows $L(\gamma^n,\gamma^{n+1})_{0} = \bigO(\dx^3)$ as well as
  \[
    L(\gamma^n,\gamma^{n+1})_{-1} = - \frac{\lambda}{2} \gnn + \bigO(\dx^3), \quad
    L(\gamma^n,\gamma^{n+1})_{1} = \frac{\lambda}{2} \gnn + \bigO(\dx^3).
    \]
This implies by linearity
  \[
L(\bar{w}^n,\bar{w}^{n+1})_i = \bigO(\dx^3) \quad \forall \: i.
\]
Therefore, assumptions \eqref{eq: condition WW 1} and \eqref{eq: condition WW 2}
are satisfied.\\
  
\textbf{Case $\alpha<1$:} In this case we need to address all four error sources.
First, a direct computation shows that for forward differences there holds with $\beta = \frac{1+\alpha}{2}$
\begin{subequations}\label{eq: 1step forward}
\begin{align}
L(\bar{s}^n,\bar{s}^{n+1})_{-1}&=\frac{\lambda}{4}\Big(-\lambda^2 + (1-\beta) -\frac{1}{2\beta}\frac{1}{6}(\alpha^2-1)\Big)\dx^2s^n_{xx}(t^n,x_{-1}) + \bigO(\dx^3),\label{eq: 1step forward -1}\\
L(\bar{s}^n,\bar{s}^{n+1})_{0} &=\bigO(\dx^3), \label{eq: 1step forward 0}\\
L(\bar{s}^n,\bar{s}^{n+1})_{1} &=\frac{\lambda}{4}\Big(\lambda^2 + (\beta-1) +\frac{1}{2\beta}\frac{1}{6}(\alpha^2-1)  \Big)\dx^2s^n_{xx}(t^n,x_1) + \bigO(\dx^3).\label{eq: 1step forward 1}
\end{align}
\end{subequations}
Note in particular that for this setup, the one step error on the small cell is surprisingly of third order as errors cancel nicely.
We define
\begin{equation}\label{eq: def w alpha != 1}
  \bar{w}^n_i = \bar{s}^n_i + \gni \quad \text{ with } \quad
  \gni= \begin{cases}
    \gnn & \text{ for } i=0,\\
    0 & \text{ otherwise},\\
    \end{cases}
  \end{equation}
  with
    \begin{equation}\label{eq: def gnn}
\gnn=\frac{\beta}{2}\Big(-\lambda^2+(1-\beta)-\frac{1}{2\beta}\frac{1}{6}(\alpha^2-1) \Big) \dx^2 s^n_{xx}(t^n,x_0) .
\end{equation}
This results in $L(\gamma^n,\gamma^{n+1})_{0} = \bigO(\dx^3)$ as well as in
\[
    L(\gamma^n,\gamma^{n+1})_{-1} = - \frac{\lambda}{2\beta} \gnn + \bigO(\dx^3), \quad
    L(\gamma^n,\gamma^{n+1})_{1} = \frac{\lambda}{2\beta} \gnn + \bigO(\dx^3),
  \]
  which implies the claim.
\end{proof}

\begin{remark}
  The advantage of forward differences compared to central differences lies in the reduced coupling of cells.
  For central differences, we have not been able to find suitable coefficients $\gamma^n$ when using periodic boundary conditions.
  But the numerical results in section \ref{subsec: num results MUSCL-Trap 1d} imply that the same order of convergence holds true
  when one employs central differences instead of forward ones.
\end{remark}

\begin{remark}
  We note that we addressed two different error sources in this proof: the error
  caused by the switch in the time stepping scheme and the error caused by the irregular length of the cut cell.
  The reason that the former error does
  not accumulate is simply that we make an error of size $-\frac{\lambda^3}{4}\dx^2 s_{xx}(t^n,x_{-1})$ on cell $-1$ and an error of size $\frac{\lambda^3}{4}\dx^2 s_{xx}(t^n,x_{1})$ on cell $1$. In other words: the error that we made on cell $-1$ cancels to leading order with the error that we make two cells later. 
  \end{remark}

    Finally, we verify our results numerically. We repeat \textbf{Test 2} using forward difference quotients but this time we compare our discrete solution $S_i^n$ to our new
    grid function $\bar{w}$ given by \eqref{eq: def w alpha != 1} and \eqref{eq: def gnn}. The result is given in Table \ref{Table: MUSCL-Trap 1d new model problem new grid fct} and shows the
    expected convergence rates.

    \begin{table}
      \begin{center}
 {\small
   \caption{Result of \textbf{Test 2}: Error for MUSCL-Trap using forward difference quotients for the model problem shown in figure \ref{Fig: new 1d model problem}. 
     The new grid function $\bar{w}$ given by \eqref{eq: def w alpha != 1} is used for initialization and error computation.}
\label{Table: MUSCL-Trap 1d new model problem new grid fct}
\begin{tabular}{crcccccc}
\hline 
Final time & $\dx$ & $L^1$ error & order & $L^{\infty}$ error & order \\
\hline
\\[-0.2cm]
  1 step error
%
  & 1/160 	 & 1.74e-06 & -- & 5.23e-06 & -- \\
& 1/320 	 & 2.24e-07 & 2.95 & 6.29e-07 & 3.06 \\
& 1/640 	 & 2.85e-08 & 2.98 & 8.21e-08 & 2.94 \\[0.1cm]

  1 period error
& 1/160 	 & 3.74e-04 & -- & 5.90e-04 & -- \\
& 1/320 	 & 9.28e-05 & 2.01 & 1.46e-04 & 2.02 \\
& 1/640 	 & 2.32e-05 & 2.00 & 3.71e-05 & 1.98 \\[0.05cm]  
\hline
\end{tabular}  }   
\end{center}
\end{table}

\FloatBarrier

\subsection{New variants of mixed explicit implicit schemes}

Our theoretical and numerical considerations above imply that the mixed scheme MUSCL-Trap is second-order accurate in 1d, despite only having a 
second-order one step error. 
The numerical results in 2d though for MUSCL-Trap presented in \cite{May_Berger_explimpl} showed
convergence rates between 1.3 and 1.6 measured in the $L^{\infty}$ norm. This implies that in 2d
the error {\em does accumulate} to some extent. Therefore, we cannot expect to be able to find a new grid function
$\bar{w}^n$ with properties (i) and (ii) in 2d.

In order to find a scheme with a third-order one step error, we would need to address all four error sources. As the error sources 2(a)-(c) are very difficult to examine in 2d (due to varying sizes and shapes of cut cells) and are generally shared by most cut cell schemes, we focus here on the error caused by
switching the time stepping scheme.
Further, we expect the error for switching the time stepping scheme to have a very different effect in 1d and 2d: in 1d, characteristics are typically set up in a way that an error caused by switching from explicit to implicit is followed very briefly afterwards by an error caused by switching from implicit back to explicit.
In 2d however, see the setup of the ramp test below in figure \ref{Fig: switching scheme in 2d}, this effect of switching back and forth might not be there. Instead there can be characteristics that mainly go through transition cells.

\subsubsection{Developing mixed time stepping schemes with better transition error}

We will discuss the issue of reducing the error by switching the time stepping scheme in a sightly more general setting.
For finite volume schemes, one typically uses the conservative update formula
\begin{equation}\label{eq: cons update}
S_i^{n+1} = S_i^n - \frac{\dt}{\dx} \left[ F_{i+1/2} - F_{i-1/2} \right]
\end{equation}
with $F_{i\pm 1/2}$ being an appropriate approximation to the flux at the edge $x_{i\pm 1/2}$ during the time step.
In the case of the linear advection equation \eqref{eq: lin adv} (with WLOG $u=1$), we try to approximate
\begin{equation}\label{eq: flux to be approx}
F_{i+1/2}^{\text{true}} = \frac{1}{\dt} \int_{t_n}^{t_{n+1}} s(t,x_{i+1/2}) \: dt.
  \end{equation}

  To only focus on the temporal error, we will reinterpret time stepping schemes as quadrature formulae for approximating
  the integral in \eqref{eq: flux to be approx}, assuming that we know all values at the edges $x=x_{i\pm 1/2}$ that we need.
  To give an example: using explicit Euler in time would correspond to approximating $F_{i+1/2}^{\text{true}}$ by the point evaluation
  $s(t^n,x_{i+1/2})$. For MUSCL and Trapezoidal time stepping we have the following results:
  \begin{itemize}
  \item MUSCL: The corresponding time stepping scheme is the {\em explicit} midpoint rule, which corresponds to
    the following approximation of $F_{i+1/2}^{\text{true}}$:
    \[
F_{i+1/2}^{(M)} = s(t^n,x_{i+1/2}) + \frac{\dt}{2} \: s_t(t^n,x_{i+1/2}).
    \]
    This results in
    \[
F_{i+1/2}^{\text{true}} - F_{i+1/2}^{(M)} = \frac{\dt^2}{6} s_{tt}(t^n,x_{i+1/2}) +  \bigO(\dt^3).
\]
\item Trapezoidal: The Trapezoidal time stepping scheme approximates $F_{i+1/2}^{\text{true}}$ by
  \[
F_{i+1/2}^{(T)} = \frac{1}{2} \left[ s(t^n,x_{i+1/2}) + s(t^{n+1},x_{i+1/2}) \right].
\]
This results in the error
        \[
       F_{i+1/2}^{\text{true}} -  F_{i+1/2}^{(T)} = - \frac{\dt^2}{12} s_{tt}(t^n,x_{i+1/2}) +  \bigO(\dt^3).
\]
\end{itemize}
Therefore, assuming that the reconstruction in space is done sufficiently accurately, when using Trapezoidal rule time stepping, we 
approximate $F_{i+1/2}^{\text{true}}$ with an error of $\bigO(\dt^2)$. However, when we use Trapezoidal {\em on both edges},
we make to leading order the same (systematic) error for the approximation of
$F_{i-1/2}^{\text{true}}$. When applying the update formula \eqref{eq: cons update}, the leading order error term cancels and there holds
\begin{multline*}
  \left[ F_{i+1/2}^{\text{true}} - F_{i-1/2}^{\text{true}} \right]
  -  \left[ F_{i+1/2}^{(T)} - F_{i-1/2}^{(T)} \right]\\
  = -\frac{\dt^2}{12} s_{tt}(t^n,x_{i+1/2}) + \frac{\dt^2}{12} s_{tt}(t^n,x_{i-1/2}) +  \bigO(\dt^3)
  = \bigO(\dt^2\dx) + \bigO(\dt^3).
  \end{multline*}
  Therefore, for $\dx = \bigO(\dt)$, 
  we get an error
of order $\bigO(\dt^3)$ (for the time approximation).
%
However, if we use the explicit midpoint rule (
$\hat{=}$ MUSCL) for $F_{i-1/2}$ and Trapezoidal time stepping
for $F_{i+1/2}$, we get
\begin{multline*}
  \left[ F_{i+1/2}^{\text{true}} - F_{i-1/2}^{\text{true}} \right]
  - \left[ F_{i+1/2}^{(T)} - F_{i-1/2}^{(M)} \right] \\
  = -\frac{\dt^2}{12} s_{tt}(t^n,x_{i+1/2}) - \frac{\dt^2}{6} s_{tt}(t^n,x_{i-1/2}) +  \bigO(\dt^3)
  = \bigO(\dt^2).
  \end{multline*}
  This is the transition error that we saw before.

Note that these considerations have another important implication: the {\em seemingly} easiest way to improve the transition error that we observed for MUSCL-Trap
would be to just keep MUSCL and use a third-order implicit scheme. Or to keep the Trapezoidal time stepping scheme and upgrade the explicit time stepping scheme
to third order. However, both versions would {\em not} show the result one hoped for. The easiest way to see this is by combining, for example,
the MUSCL flux $F_{i-1/2}^{(M)}$ with the true flux $F_{i+1/2}^{\text{true}}$. Then, when applying the conservative update
formula \eqref{eq: cons update}, we would be left with an error of $\bigO(\dt^2)$ despite the scheme being apparently more accurate than when applying MUSCL
fluxes on both edges.

Therefore, the two options to get a scheme with a third-order one step error
(with respect to the time error) are to (a) either use both an explicit and an implicit time stepping scheme
that approximate $F_{i\pm 1/2}^{\text{true}}$ with third order or (b)
to find a combination of a second-order explicit and a second-order implicit time stepping scheme such that the leading order error terms match.

Besides approach (a) being more expensive, third-order time stepping schemes also typically involve several stages, which makes the coupling
very complicated.
We therefore follow approach (b): we fix the implicit Trapezoidal rule and look for a new explicit time stepping scheme. We will consider two different modifications of the MUSCL scheme.

\subsubsection{The MUSCLmod-Trap scheme}

The first approach is to change the MUSCL scheme given in \eqref{eq: MUSCL 1d} to use
\[
F_{i+1/2}^{n+1/2,M\text{mod}} = u \left( S_i^n + (1 - \lambda) S_{i,x}^n \frac{\dx}{2} - \frac{1}{4} \lambda(1-\lambda)h^2 S_{i,xx}^n \right).
\]
The idea is to account for the error term $\frac{\dt}{4}s_{tt}(t^n,x_{1+1/2})$ that we saw above when switching between Trapezoidal and explicit midpoint rule but the details slightly vary as we generally do not have the exact values on the edges. The precise formulation simply comes out of the error analysis for combining MUSCL with Trapezoidal.
The truncation error analysis shows that
\begin{itemize}
\item using MUSCLmod on an equidistant mesh as fully explicit scheme, there holds
  $L(\overline{s}_i^{n+1}) = \mathcal{O}(\dx^3)$;
\item using MUSCLmod in a mixed MUSCLmod-Trap scheme (with $\alpha=1$), there holds
   $L(\overline{s}_i^{n+1}) = \mathcal{O}(\dx^3)$.
 \end{itemize}
 A von Neumann stability analysis for the new MUSCL version shows a CFL condition of $0 < \lambda \le 1$.

 In table \ref{Table: MUSCLmod-Trap  1d new model problem} we show the result for the new mixed MUSCLmod-Trap scheme for \textbf{Test 2}, but using $\alpha=1$, i.e., we switch the time stepping between explicit and implicit at the corresponding cells but do that on an equidistant mesh to avoid the error sources 2(a)-(c). As expected we observe third order for the one step error and second error for the error at time $T$. We use a standard second-order accurate difference quotient to evaluate the second derivatives.

 \begin{table}[h]
   \begin{center}
{\small
    \caption{Result of \textbf{Test 2}: Error for MUSCLmod-Trap for model problem \ref{Fig: new 1d model problem} with $\alpha=1$ (i.e., equidistant mesh but including change in time stepping scheme).}
\label{Table: MUSCLmod-Trap  1d new model problem}
\begin{tabular}{crcccc}
\hline 
Final time & $\dx$ & $L^1$ error & order & $L^{\infty}$ error & order \\
\hline
\\[-0.2cm]
1 step error   & 1/160 	 & 9.06e-07 & -- & 6.65e-06 & -- \\
& 1/320 	 & 1.18e-07 & 2.95 & 8.21e-07 & 3.02 \\
& 1/640 	 & 1.49e-08 & 2.98 & 1.05e-07 & 2.96 \\[0.1cm]
1 period error    & 1/160 	 & 1.79e-04 & -- & 2.78e-04 & -- \\
& 1/320 	 & 4.51e-05 & 1.99 & 6.94e-05 & 2.00 \\
& 1/640 	 & 1.13e-05 & 2.00 & 1.73e-05 & 2.00 \\ [0.05cm]
\hline
\end{tabular}  }   
\end{center}
\end{table}

\subsubsection{The MPRKC-Trap scheme}

For the second variant, consider the 
{\em explicit} Trapezoidal rule, also known as standard second-order SSP RK (strong stability preserving Runge-Kutta) scheme \cite{Gottlieb_Shu}, and given by (for the ODE $y_t = g(y(t))$): 
\[
y^{(1)} = y^n + \dt g(y^n), \quad y^{n+1} = y^n + \frac{\dt}{2} \left[ g(y^n) + g(y^{(1)}) \right].
\]
If we compare that with the implicit Trapezoidal rule, we see that there is a $\mathcal{O}(\dt^2)$ difference due to using a second-order predictor step $y^{(1)}$ instead of $y^{n+1}$. Therefore, to get an explicit scheme that matches the leading order error term of $F_{i+1/2}^{(T)}$ up to third order,
we need to replace the
predictor step (computation of $y^{(1)}$), which currently uses explicit Euler, by a more accurate predictor. We choose the explicit midpoint rule for this purpose. Together with a suitable space discretization (the new predictor step will correspond to the MUSCL scheme), this results in a new explicit finite volume scheme, which
we will present now. Note that by construction this new scheme will show a third-order one step transition
error when coupled to (implicit) Trapezoidal time stepping with slope reconstruction.

The new explicit scheme, which we call MPRKC (for MUSCL Predictor RK Corrector) scheme, is given by
\begin{subequations}\label{eq: MPRKC}
  \begin{align}
    S^{(1)}_i&=S^n_i-\frac{\dt}{\dx}\Big(F^{n+1/2,M}_{i+1/2} - F^{n+1/2,M}_{i+1/2}  \Big),  \label{predictor} \\
    S^{n+1}_i&=S^n_i-\frac{\dt}{\dx} \Big(F^{n+1/2,ET}_{i+1/2}-F^{n+1/2,ET}_{i-1/2}\Big),  \label{corrector}
\end{align}
\end{subequations}
with
\begin{equation}\label{eq: flux explicit}
F^{n+1/2,ET}_{i+1/2}=\frac{u}{2} \Big(S^n_i+S^n_{i,x}\frac{\dx}{2} + S^{(1)}_i+S^{(1)}_{i,x}\frac{\dx}{2} \Big).
\end{equation}
\begin{remark}
  In terms of cost, the new scheme is roughly twice as expensive as MUSCL
  for taking one time step. Compared to using the standard second-order SSP RK scheme
  with slope reconstruction in space, the cost is roughly the same. We just exploit the information from the slope reconstruction in stage 1 more carefully.
  \end{remark}

    \begin{lemma}
      The scheme \eqref{eq: MPRKC}-\eqref{eq: flux explicit} is linearly stable for the linear advection equation \eqref{eq: lin adv} on an equidistant mesh under the
      CFL condition $0 < \lambda \le 1.$
      \end{lemma}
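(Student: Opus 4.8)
The statement is a linear (von Neumann) stability result on an \emph{equidistant} mesh, so the natural plan is a Fourier analysis of the one‑step amplification factor of the two‑stage scheme \eqref{eq: MPRKC}--\eqref{eq: flux explicit}, followed by an elementary estimate for the resulting trigonometric polynomial. First I would insert the Fourier mode $S_j^n = \rho^n e^{\mathrm{i} j\theta}$, $\theta\in[-\pi,\pi]$, with unlimited central‑difference slopes $S_{i,x}^n=(S_{i+1}^n-S_{i-1}^n)/(2\dx)$, reading the predictor \eqref{predictor} as the MUSCL update $S^{(1)}_i=S^n_i-\tfrac{\dt}{\dx}(F^{n+1/2,M}_{i+1/2}-F^{n+1/2,M}_{i-1/2})$. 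Writing $z:=1-e^{-\mathrm{i}\theta}$ for the symbol of the conservative difference, $a:=1+\tfrac12\mathrm{i}\sin\theta$ for the reconstruction to the right edge used in \eqref{eq: flux explicit}, and $b:=1+(1-\lambda)\tfrac12\mathrm{i}\sin\theta$ for the reconstruction used in the MUSCL flux, the predictor has the usual MUSCL symbol $g_M(\theta)=1-\lambda b z$, and the corrector, which averages the two edge reconstructions, has symbol
\[
g(\theta)=1-\tfrac{\lambda}{2}\,a z\,\bigl(1+g_M(\theta)\bigr)=1-\lambda a z+\tfrac{\lambda^2}{2}\,a b\,z^2 .
\]
Since the symbol is scalar, linear stability on the equidistant mesh is equivalent to $\abs{g(\theta)}\le 1$ for all $\theta$, and this is what I would establish.

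Next I would introduce $\xi:=\sin^2(\theta/2)\in[0,1]$, so that $1-\cos\theta=2\xi$, $\sin^2\theta=4\xi(1-\xi)$, $\abs{z}^2=4\xi$ and $z^2=-4\xi\,e^{-\mathrm{i}\theta}$. Computing $\operatorname{Re}(az)$, $\abs{az}^2$, $\operatorname{Re}(bz)$ and $\abs{bz}^2$ as polynomials in $\xi$ and $\lambda$ and substituting into $\abs{g}^2=g\bar g$ should collapse everything to
\[
1-\abs{g(\theta)}^2 = 4\lambda\,\xi^2\,B(\xi),\qquad
B(\xi)=c_0+c_1\xi-c_2\xi^2-c_3\xi^3-c_4\xi^4,
\]
with $c_0=1-\lambda^2+\lambda^3$, $c_1=\lambda^2(1-\lambda)(1-\lambda+\lambda^2)$, $c_2=\lambda(3\lambda^2-4\lambda+2)$, $c_3=2\lambda^2(1-\lambda)(1-\lambda+\lambda^2)$ and $c_4=\lambda^3(1-\lambda)^2$. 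It then remains to show $B(\xi)\ge 0$ on $[0,1]$ for $0<\lambda\le 1$.

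The polynomial estimate is short. The coefficients $c_2,c_3,c_4$ are all nonnegative for $0<\lambda\le 1$ (for $c_2$ note that $3\lambda^2-4\lambda+2$ has negative discriminant), so $B''(\xi)=-2c_2-6c_3\xi-12c_4\xi^2\le 0$ on $[0,1]$; hence $B$ is concave there and attains its minimum at an endpoint. Finally $B(0)=c_0=1-\lambda^2+\lambda^3$ is strictly positive on $[0,1]$ (its minimum over $\lambda$ is $23/27$, at $\lambda=2/3$), and a short computation gives $B(1)=(1-\lambda)(1-\lambda+\lambda^2)\ge 0$, with equality only at $\lambda=1$. Therefore $B\ge 0$ and $\abs{g(\theta)}\le 1$ for all $\theta$, which proves the lemma; $\lambda=1$ is the borderline case where $\abs{g(\pm\pi)}=1$, consistent with the claimed CFL condition.

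The only genuinely laborious part is the bookkeeping in the middle step: carrying out the $g\bar g$ expansion, eliminating $\sin^2\theta$ via $4\xi(1-\xi)$ and $z^2$ via $-4\xi e^{-\mathrm{i}\theta}$, and simplifying until the common factor $4\lambda\xi^2$ emerges and the quartic $B$ appears with exactly the above signs. This is elementary but error‑prone, and pinning down the coefficient $c_2$ of $\xi^2$ — the only negative coefficient that survives uniformly in $\lambda$, and hence the one that is really responsible for the CFL restriction — is what makes the concavity argument in the last step work.
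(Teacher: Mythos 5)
Your proposal is correct and starts from the same von Neumann framework that the paper uses, but it carries the argument strictly further: the paper's proof stops at ``we analytically deduce a very lengthy expression for $\abs{G}$ and then verify \emph{numerically} that $\abs{G}\le 1$ iff $0<\lambda\le 1$,'' whereas you exhibit the closed-form factorization $1-\abs{g(\theta)}^2=4\lambda\,\xi^2B(\xi)$ with $\xi=\sin^2(\theta/2)$ and then prove $B\ge 0$ on $[0,1]$ by an elementary concavity argument. I checked your symbol $g=1-\lambda az+\tfrac{\lambda^2}{2}abz^2$ against the scheme (reading the obvious typo $F_{i+1/2}-F_{i+1/2}$ in the predictor as $F_{i+1/2}-F_{i-1/2}$, and taking unlimited central-difference slopes for both $S^n$ and $S^{(1)}$, as the paper intends on an equidistant mesh), and I spot-checked the claimed identity at $\xi\to 0$ (leading coefficient $c_0=1-\lambda^2+\lambda^3$), at $\xi=\tfrac12$, $\xi=\tfrac34$ and $\xi=1$ for $\lambda\in\{\tfrac12,1\}$; all values agree, so your coefficients $c_0,\dots,c_4$ appear to be right, and the endpoint/concavity step ($c_2,c_3,c_4\ge 0$, $B(0)\ge 23/27$, $B(1)=(1-\lambda)(1-\lambda+\lambda^2)$) is sound. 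What your route buys is a fully analytic proof where the paper's is semi-numerical; as a by-product, $B(1)<0$ for $\lambda>1$ also yields the ``only if'' direction at $\theta=\pi$, matching the paper's numerically observed sharpness of the CFL bound. The one caveat is the one you name yourself: the reduction of $g\bar g$ to the quartic $B$ is asserted rather than displayed, so the write-up is only as complete as that bookkeeping; given how error-prone it is, the final version should either include the expansion or at least record the intermediate quantities $\operatorname{Re}(az)$, $\abs{az}^2$, $\operatorname{Re}(abz^2)$, $\abs{abz^2}^2$ and $\operatorname{Re}(a\bar a\bar b z\bar z^2)$ as polynomials in $\xi$ so a reader can verify the cancellation of the $\xi^0$ and $\xi^1$ terms that produces the common factor $\xi^2$.
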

\begin{proof}
  The claim follows by means of a von Neumann stability analysis. We analytically deduce a very lengthy and complicated expression
  for the amplification factor $\abs{G}$ (not given here). We then verify numerically that $\abs{G} \le 1$ if and only if $ 0 < \lambda \le 1$.
\end{proof}

We combine the explicit MPRKC scheme with the implicit Trapezoidal scheme \eqref{eq: Trap 1d}
using flux bounding, resulting in MPRKC-Trap. This is sketched in
figure \ref{Fig: MPRKC-Trap}.
Due to MPRKC being a two-stage scheme, the implicit region has become bigger
compared to MUSCL-Trap.
\begin{figure}
  \centering
    \begin{tikzpicture}[scale=0.9,
axis/.style={very thick, line join=miter, ->}]
\draw [axis] (-6.3,-0.3) -- (-6.3,1.4) node(xline)[right] {};
%
%
\node[] at (-6.8,0) {{\small $t^n$}};
\node[] at (-6.8,1.24) {{\small $t^{n+1}$}};
\draw[dotted] (-6.5,0) -- (-3.6,0);
\draw (-5.4,0) -- (5.9,0);
\draw[dotted] (4.1,0) -- (6.9,0);
\draw (-5,-0.1) -- (-5,0.1);
\draw (-4,-0.1) -- (-4,0.1);
\draw (-3,-0.1) -- (-3,0.1);
\draw (-2,-0.1) -- (-2,0.1);
\draw (-1,-0.1) -- (-1,0.1);
\draw (0,-0.1) -- (0,0.1);
\draw (0.5,-0.1) -- (0.5,0.1);
\draw (1.5,-0.1) -- (1.5,0.1);
\draw (2.5,-0.1) -- (2.5,0.1);
\draw (3.5,-0.1) -- (3.5,0.1);
\draw (4.5,-0.1) -- (4.5,0.1);
\draw (5.5,-0.1) -- (5.5,0.1);
\node[] at (-4.5,-0.4) {{\footnotesize $-5$}};
\node[] at (-3.5,-0.4) {{\footnotesize $-4$}};
\node[] at (-2.5,-0.4) {{\footnotesize $-3$}};
\node[] at (-1.5,-0.4) {{\footnotesize $-2$}};
\node[] at (-0.5,-0.4) {{\footnotesize $-1$}};
\node[] at (0.25,-0.4) {{\footnotesize $0$}};
\node[] at (1,-0.4) {{\footnotesize $1$}};
\node[] at (2,-0.4) {{\footnotesize $2$}};
\node[] at (3,-0.4) {{\footnotesize $3$}};
\node[] at (4,-0.4) {{\footnotesize $4$}};
\node[] at (5,-0.4) {{\footnotesize $5$}};
\draw[fill] (-2.5,0) circle (.08cm);
\draw[fill] (-1.5,0) circle (.08cm);
\draw[fill] (-0.5,0) circle (.08cm);
\draw[fill] (0.25,0) circle (.08cm);
\draw[fill] (1,0) circle (.08cm);
\draw[fill] (2,0) circle (.08cm);
 \draw[fill] (3,0) circle (.08cm);
\draw[dotted] (-6.,1.04) -- (-3.6,1.04);
\draw (-5.4,1.04) -- (5.9,1.04);
\draw[dotted] (4.1,1.04) -- (6.9,1.04);
\draw (-5,0.94) -- (-5,1.14);
\draw (-4,0.94) -- (-4,1.14);
\draw (-3,0.94) -- (-3,1.14);
\draw (-2,0.94) -- (-2,1.14);
\draw (-1,0.94) -- (-1,1.14);
\draw (0,0.94) -- (0,1.14);
\draw (0.5,0.94) -- (0.5,1.14);
\draw (1.5,0.94) -- (1.5,1.14);
\draw (2.5,0.94) -- (2.5,1.14);
\draw (3.5,0.94) -- (3.5,1.14);
\draw (4.5,0.94) -- (4.5,1.14);
\draw (5.5,0.94) -- (5.5,1.14);
\draw[fill=white] (-4.5,1.04) circle (.12cm);
\draw[fill=white] (-3.5,1.04) circle (.12cm);
\draw[fill=white] (4,1.04) circle (.12cm);
\draw[fill=white] (5,1.04) circle (.12cm);
\draw[fill=white] (-2.62,0.92) rectangle (-2.38,1.16);
\draw[fill=white] (-1.62,0.92) rectangle (-1.38,1.16);
\draw[fill=white] (-0.62,0.92) rectangle (-0.38,1.16);
\draw[fill=white] (0.13,0.92) rectangle (0.37,1.16);
\draw[fill=white] (0.88,0.92) rectangle (1.12,1.16);
\draw[fill=white] (1.88,0.92) rectangle (2.12,1.16);
\draw[fill=white] (2.88,0.92) rectangle (3.12,1.16);
\draw[line width = 0.7mm,OliveGreen] (-5,-0) -- (-5,1.04);
\draw[line width = 0.7mm,OliveGreen] (-4,-0) -- (-4,1.04);
\draw[line width = 0.7mm,OliveGreen] (-3,-0) -- (-3,1.04);
\draw[line width = 0.7mm,azure] (-2,-0) -- (-2,1.04);
\draw[line width = 0.7mm,azure] (-1,-0) -- (-1,1.04);
\draw[line width = 0.7mm,azure] (0,-0) -- (0,1.04);
\draw[line width = 0.7mm,azure] (0.5,-0) -- (0.5,1.04);
\draw[line width = 0.7mm,azure] (1.5,-0) -- (1.5,1.04);
\draw[line width = 0.7mm,azure] (2.5,-0) -- (2.5,1.04);
\draw[line width = 0.7mm,OliveGreen] (3.5,-0) -- (3.5,1.04);
\draw[line width = 0.7mm,OliveGreen] (4.5,-0) -- (4.5,1.04);
\draw[line width = 0.7mm,OliveGreen] (5.5,-0) -- (5.5,1.04);
\end{tikzpicture}
  \caption{The new MPRKC-Trap scheme: We compute $S_i^{(1)}$ using the MUSCL scheme for cells
    $i \le -2$ and $i \ge 2$ as sketched in figure \ref{Fig: switch scheme: done with expl scheme}. Then we compute
    the explicit flux $F^{n+1/2,ET}$ given by \eqref{eq: flux explicit} for all edges marked in
    green and update the fully explicitly treated cells $i \le -4$ and $i \ge 4$. We combine this using flux bounding with an implicit treatment (edges marked in light blue) of cells $-3,\ldots,3$. Note that now cells $-3$ and $3$ correspond to the transition cells. }
\label{Fig: MPRKC-Trap}
\end{figure}
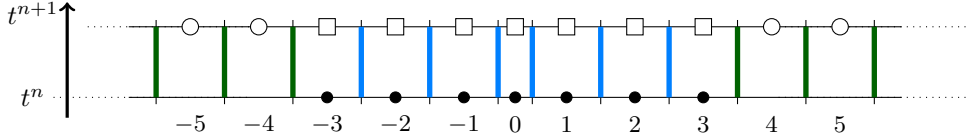
The results for the mixed scheme on an equidistant mesh with $\alpha=1$ but with switching the scheme as indicated in \textbf{Test 2} are shown in table
\ref{Table: MPRKC-Trap 1d new model problem}. As expected, the one step error converges with third order and the error at time $T$ with second order.

\begin{table}
  \begin{center}
{\small
    \caption{Result of \textbf{Test 2}: Error for MPRKC-Trap for model problem \ref{Fig: new 1d model problem} with $\alpha=1$ (i.e., equidistant mesh but including change in time stepping scheme).}
\label{Table: MPRKC-Trap 1d new model problem}
\begin{tabular}{crcccc}
\hline 
Final time & $\dx$ & $L^1$ error & order & $L^{\infty}$ error & order \\
\hline
\\[-0.2cm]
1 step error  
& 1/160 	 & 8.45e-07 & -- & 2.10e-06 & -- \\
& 1/320 	 & 1.07e-07 & 2.98 & 2.59e-07 & 3.01 \\
           & 1/640 	 & 1.36e-08 & 2.98 & 3.32e-08 & 2.97 \\ [0.1cm]
1 period error  & 1/160 	 & 1.81e-04 & -- & 2.79e-04 & -- \\
& 1/320 	 & 4.52e-05 & 2.00 & 6.95e-05 & 2.01 \\
& 1/640 	 & 1.13e-05 & 2.00 & 1.73e-05 & 2.00 \\ [0.05cm]
\hline
\end{tabular}  }   
\end{center}
\end{table}

\section{Mixed explicit implicit schemes in 2d}\label{sec: schemes 2d}

In the following we will discuss mixed explicit implicit schemes in 2d:
we will first briefly introduce the 2d version of the MUSCL-Trap scheme as presented in
\cite{May_Berger_explimpl} and examine its error. Then, we will introduce variants of MUSCLmod-Trap and MPRKC-Trap in 2d. We will conclude with a comparison of the resulting schemes for both a fully Cartesian mesh and a cut cell mesh.

\subsection{The MUSCL-Trap scheme}
In 2d, we solve the linear advection equation
\begin{equation}\label{eq: lin adv 2d}
s_t + u s_x + v s_y = 0.
\end{equation}
Here, $s(t,x,y)$ denotes a scalar field.
For simplicity, we will only consider a
constant velocity field, i.e., $u$ and $v$ are assumed to be constant. This is sufficient to show our main findings.

As explicit scheme, an {\em unsplit}, two-dimensional MUSCL scheme \cite{almgrenBellSzymczak:1996} is used.
For brevity reasons, we will not present the details here.
We note though that this MUSCL scheme uses corner-coupling to capture proper dependencies in 2d
and is therefore stable under the CFL condition $0 < \nu \le 1$ in combination with the time step computation
\begin{equation}\label{eq: compute dt MUSCL}
\dt=\nu\min\left(\frac{\Delta x}{\abs{u}},\frac{\dy}{\abs{v}}\right).
\end{equation}

\begin{figure}
 \centering
 \includegraphics[scale = 0.5]{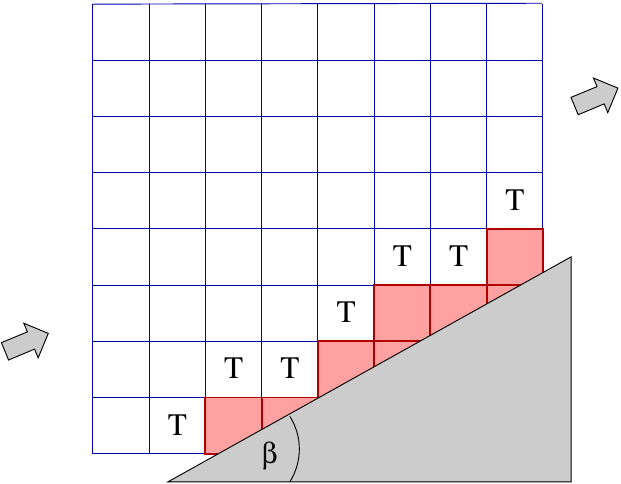}
 \hspace*{0.6cm}
 \includegraphics[scale = 0.5]{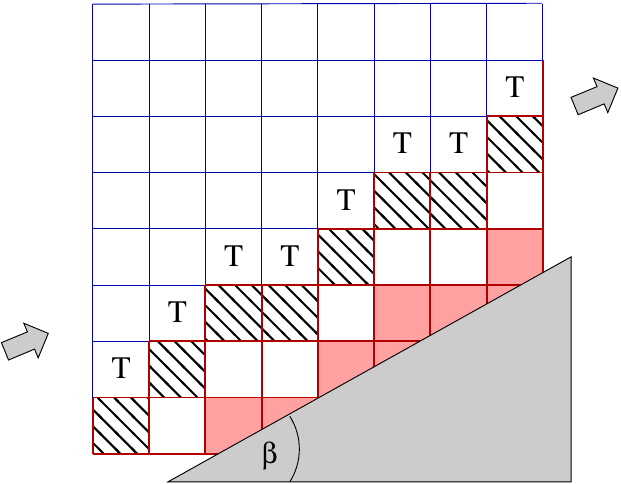}
 \caption{Switching between schemes in 2d: Cut cells are marked by pink color.
   Implicit fluxes are marked by a bold, red line, explicit fluxes by blue.
   \textbf{\em Left:} \textbf{MUSCL-Trap:} `Transition' cells (marked with `T') are full Cartesian cells
   that share an edge with a cut cell. 
   \textbf{\em Right:} \textbf{MPRKC-Trap:} The shaded cells corresponds to the layer of cells closest to the embedded object
   that can be updated using the MUSCL predictor step. Transition cells (marked with `T') have been shifted by 2 cell layers
 compared to MUSCL-Trap. 
 }
 \label{Fig: switching scheme in 2d}
 \end{figure}
%
 Flux bounding is used to couple the explicit MUSCL scheme with an implicit scheme.
 The idea is sketched in the left graphic of figure \ref{Fig: switching scheme in 2d}: Full Cartesian cells that are edge neighbors of cut cells are marked as {\em transition cells}.
 Cut cells are treated fully implicit. Therefore, fluxes between cut cells and transition cells use an implicit scheme. All remaining fluxes, in particular
 fluxes between two transition cells, use an explicit scheme.

 This results in the following two-step algorithm, which is analogous to the 1d situation:
Given $S_{ij}^n$, 
\begin{enumerate}[label=(\roman*)]
\item compute all explicit fluxes using the MUSCL scheme and update all fully explicitly
treated cells to $S_{ij}^{n+1}$;
\item compute all implicit fluxes and update cut cells and transition cells to $S_{ij}^{n+1}$.
\end{enumerate}

As implicit scheme, Trapezoidal time stepping in combination with
a least squares formulation for slope reconstruction in space \cite{Barth_LS_3d,May_Berger_LP} is used. On Cartesian cells that are not transition cells standard central differences are used for slope reconstruction. 
No limiting is applied.

We note that on cut cells, one reconstructs to the midpoint of the cut cell edges, not to the midpoint of the Cartesian edges.
To be more precise, the update on a cut cell is given by
\begin{align}\label{eq: trap on cut cell 2d}
\begin{split}
S_{ij}^{n+1} &= S_{ij}^n - \frac{\Delta t}{\alpha_{ij}\Delta x \Delta y}
\biggl[ F_{i+1/2,j}^{n+1/2,T} - F_{i-1/2,j}^{n+1/2,T} + G_{i,j+1/2}^{n+1/2,T} - G_{i,j-1/2}^{n+1/2,T}  \biggr]
\end{split}
\end{align}
where  $\alpha_{ij} \in (0,1)$ is the volume fraction and
\begin{multline*}
F_{i+1/2,j}^{n+1/2,T} = \frac{1}{2}u\beta_{i+1/2,j}\Delta y (S_{i+1/2,j}^{n} + S_{i+1/2,j}^{n+1}),\hspace*{4.0cm}\\
S_{i+1/2,j}^{n} = \hspace*{10.3cm} \\ 
\begin{cases}
S_{ij}^{n} + (x_{i+1/2,j}-x_{ij}) S_{ij,x}^n + (y_{i+1/2,j}-y_{ij})
S_{ij,y}^n & \text{if} \: u > 0,\\[8pt]
S_{i+1,j}^{n} + (x_{i+1/2,j}-x_{i+1,j}) S_{i+1,j,x}^n + (y_{i+1/2,j}-y_{i+1,j}) S_{i+1,j,y}^n& \text{if} \: u < 0.
\end{cases}
\end{multline*}
Here $(x_{i+1/2,j},y_{i+1/2,j})$ denotes the location of the edge midpoint of face $(i+1/2,j)$,
$(x_{ij},y_{ij})$ denotes the centroid of cut cell $(i,j)$,
and $S_{ij,x}$ and $S_{ij,y}$ refer to the reconstructed unlimited $x$- and $y$-slopes respectively in cell $(i,j)$. Further,
$\beta_{i+1/2,j} \in [0,1]$ represents the area fraction of cut cell edge
$(i+1/2,j)$ compared to a full Cartesian edge. 
The fluxes $G_{i,j+1/2}^{n+1/2,T}$ are defined analogously.
This concludes the brief description of the MUSCL-Trap scheme. More details can be found in \cite{May_Berger_explimpl}.

\begin{remark}[Cost of MUSCL-Trap]\label{remark: cost mixed scheme}
In each time step, one needs to solve an implicit system
that couples the cut cells and transitions cells. Since cut cells occur only at the boundary of the embedded object, the size of this
system is one dimension lower than the overall number of cells. In 2d on a Cartesian grid with $N$ cells in
each direction, one expects $\mathcal{O}(N)$ unknowns in the implicit system. Therefore, using this mixed explicit implicit approach is significantly
cheaper than using an implicit scheme everywhere.
\end{remark}

In \cite{May_Berger_explimpl}, May and Berger presented numerical results for the (unlimited) MUSCL-Trap scheme in 2d.
The $L^1$ error converged with second order. The $L^{\infty}$ error
however showed convergence rates between 1.3 and 1.6. This is {\em not} in line with the second-order accuracy that we saw in 1d.
Newer results with DG codes on cut cell meshes for piecewise \textit{linear} polynomials however have also showed reduced convergence rates in
the $L^{\infty}$ norm of 1.5 to 1.6 \cite{DoD_SIAM_2020,Giuliani_DG}. This raises the general question of whether we can expect to see full second-order convergence in the $L^{\infty}$ norm on 2d cut cell meshes at all but this is the goal.

In the following we want to examine the accuracy of the scheme in 2d more closely than done in \cite{May_Berger_explimpl}.
Here, we will focus on the error caused by switching the time stepping scheme. Due to the complexity of cut cells in 2d,
our examination will be mostly based on numerical experiments.

\begin{remark}
  For our numerical tests in 2d, we use the code setup from \cite{May_Berger_explimpl} as starting point:
  our implementation is based on {\texttt{BoxLib}} \cite{BoxLib}, a library for massively parallel
AMR applications. 
For the generation of the cut cells, we use
\texttt{patchCubes}, a variant of the \texttt{cubes}
mesh generator that is part of the \texttt{Cart3D} package \cite{Cart3d_homepage,aftosmis_j98}. 
The solution of the resulting implicit system is done using \texttt{umfpack} \cite{umfpack}.
\end{remark}

\subsubsection{The transition error for MUSCL-Trap (in absence of cut cells)}

We would like to examine the time stepping transition error in the absence of error sources (2)(a)-(c) caused by the irregularity of the cut cells.
Therefore, we consider the test setup shown in figure \ref{Fig: test 2d transition error}: we create a setup where we switch between explicit and implicit time stepping but all cells are full Cartesian cells.
As on a Cartesian mesh error sources (2)(a)-(c) drop out,
the remaining error should be dominated by the switch from MUSCL to Trapezoidal.

\begin{figure}
 \centering
\includegraphics[scale = 0.57]{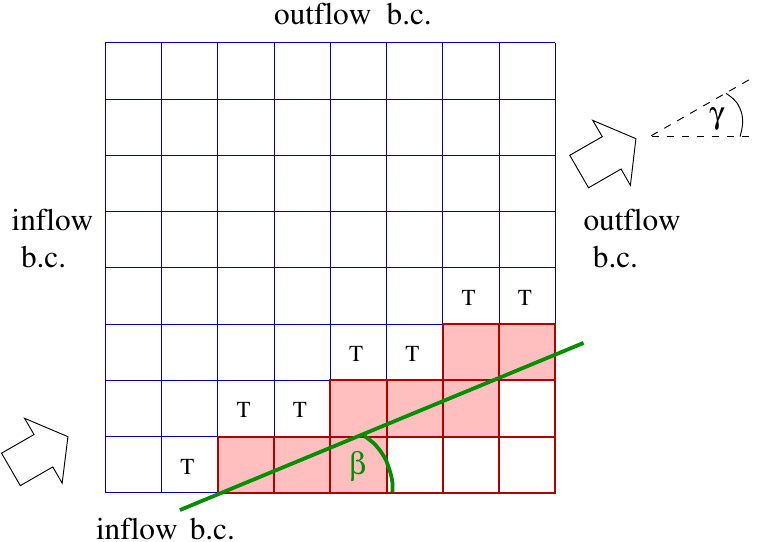}
\caption{Setup for testing transition error in 2d: All cells are Cartesian cells but we simulate the switch in time stepping. To do so, we
  intersect a Cartesian grid with a line with angle $\beta$. All cells cut by this line
  (marked in pink) are treated as `fake' (as Cartesian) cut cells. All `fake' cut cell edges as well as all edges of cells below the `fake' cut cells use the implicit
  scheme (marked in red). All remaining edges (marked in blue), i.e., all edges of cells above the `fake' cut cells use the explicit scheme. The switch takes place on the cells
  marked with a `T'. }
 \label{Fig: test 2d transition error}
\end{figure}

For the theoretical considerations, we focus on the case of 45$^\circ$ and choose $u=v=1$
and $\Delta x = \Delta y$.
The update on a transition cell is given by
\[
S_{ij}^{n+1} = S_{ij}^n - \frac{\Delta t}{\Delta x \Delta y}
\biggl[ F_{i+1/2,j}^{n+1/2,T} - F_{i-1/2,j}^{n+1/2,M} + G_{i,j+1/2}^{n+1/2,M} - G_{i,j-1/2}^{n+1/2,T}  \biggr].
\]A lengthy computation, which involves the usage of the equation $s_t + s_x+ s_y = 0$ to re-express derivatives, leads to the following formula for the
one step error on transition cells:
\[
  L(\bar{s}^{n},\bar{s}^{n+1})_{ij} = \frac{1}{4} ( \dt^2 - \tfrac{\dt^3}{\dx} ) s_{xx}(t^n,x_i,y_j)
  - \frac{1}{4} ( \dt^2 - \tfrac{\dt^3}{\dx} ) s_{yy}(t^n,x_i,y_j) + \bigO(\dt^3).
  \]
  Therefore, we expect in general a one step error of second order. For the special case $s_{xx} = s_{yy}$, we expect a third-order one step error.
  (We confirmed this in numerical experiments not shown here.)

  In numerical tests, see e.g. \textbf{Test \TestCart} below, we often observed (for a variety of angles) that on a full Cartesian mesh, we have a second-order one step error in $L^{\infty}$ due to this transition -- but that this transition error accumulates `only' to a scheme of order $\mathcal{O}(h^{1.5})$ for the $L^{\infty}$ error at time $T$.

\subsection{New mixed explicit implicit schemes}

We now present the extensions of MUSCLmod-Trap and MPRKC-Trap to two dimensions.

\subsubsection{The MUSCLmod-Trap scheme}

To derive the new explicit MUSCLmod scheme, we essentially do a truncation error analysis of Trapezoidal rule and MUSCL scheme on a mesh as shown in figure \ref{Fig: test 2d transition error} and
design MUSCLmod to contain the second-order transition error terms. One needs to be a bit careful in this computation as the original MUSCL scheme is based on an unsplit version, involving corner coupling and a more evolved evaluation of transverse derivatives. This then results in the following change of the reconstructed value $S_{i+1/2,j}^{M,n+1/2}$ (the approximation to the solution at the midpoint of edge $(i+1/2,j)$ at time $t^{n+1/2}$ used in the MUSCL scheme) for the case $u,v>0$
\[
  S_{i+1/2,j}^{M\text{mod},n+1/2} =S_{i+1/2,j}^{M,n+1/2} + \frac{\Delta t^2}{4} \left( u^2 S_{ij,xx}^n + 2uv S_{ij,xy}^n \right) 
  - \frac{\Delta t \Delta x}{4} \left( u S_{ij,xx}^n + v S_{ij,xy}^n \right) .
\]
The second derivatives are computed using standard second-order difference quotients on Cartesian cells away from cut cells. On transition cells we fit a quadratic polynomial to compute them.

Numerical tests show for MUSCLmod as fully explicit scheme on a Cartesian mesh a third-order one step error and standard second-order accuracy at time $T$. The numerical stability limit also seems to be very similar to the
original unsplit MUSCL.

\subsubsection{The MPRKC-Trap scheme}
We now extend the explicit MPRKC scheme, given by \eqref{eq: MPRKC}-\eqref{eq: flux explicit} in 1d, to 2d. We use the unsplit
MUSCL scheme described by Almgren et al. \cite{almgrenBellSzymczak:1996}, which we also use for the MUSCL-Trap scheme, to compute the
predictor $S^{(1)}$. Afterwards, we use the two-dimensional analogue of \eqref{eq: flux explicit} for the computation of the
fluxes $F_{i\pm 1/2,j}^{n+1/2,ET}$ and $G_{i,j \pm 1/2}^{n+1/2,ET}$.

The classic two-stage second-order SSP RK scheme in combination with central differences for $\Delta x = \dy = h$ is 
stable with $\nu \le 1.0$ under the CFL condition \cite{Berger_Helzel_2012}
\begin{equation}\label{eq: compute dt MPRKC}
\dt=\nu \frac{h}{\abs{u}+\abs{v}}
\end{equation}
due to using a split approach. 
The new MPRKC scheme uses the unsplit MUSCL as predictor (instead of the split upwind scheme) but the split explicit Trapezoidal scheme as corrector.
We therefore expect MPRKC to
be at least as stable as the classic two-stage second-order SSP RK scheme and to be in particular stable under the
CFL condition \eqref{eq: compute dt MPRKC}. We confirmed this in numerical tests.

Next, we briefly sketch the extension of the mixed MPRKC-Trap to 2d.
If we compare the 1d sketches of MUSCL-Trap, see figure \ref{fig: switch scheme: flux bounding},
and MPRKC-Trap,
see figure \ref{Fig: MPRKC-Trap}, we observe that the implicit zone has been extended by two cells. This holds also true in 2d:
Given $S_{ij}^n$, 
\begin{enumerate}[label=(\roman*)]
\item compute explicit fluxes using the MUSCL scheme and update all cells that have been treated fully explicitly
  by MUSCL-Trap to ${S}_{ij}^{(1)}$;
\item use the predicted values ${S}_{ij}^{(1)}$ for taking the second step of the explicit MPRKC scheme;
  due to slope reconstruction, this will `cost' two layers of cells; therefore, the position of the transition cells has been
  shifted by two cell layers to the interior of the flow domain, compare figure \ref{Fig: switching scheme in 2d};
\item compute all implicit fluxes for the extended implicit zone; update cut cells, fully implicitly treated implicit cells
  in the extended implicit zone, and transition cells to $S_{ij}^{n+1}$.
\end{enumerate}
Note that the MPRKC-Trap scheme has been constructed to have a third-order one step error on Cartesian meshes.

\subsection{Numerical results in 2d}

We will consider two different tests in the following: We will first compare the various mixed schemes on a mesh that contains only Cartesian cells.
This is to confirm our analytical considerations above. Afterwards we will compare the mixed schemes on a mesh that contains cut cells.

\subsubsection{Test on Cartesian mesh}

\textbf{Test \TestCart:} We consider the setup shown in figure \ref{Fig: test 2d transition error}. We set $\beta=30^{\circ}$. The Cartesian mesh is chosen to cover $[0,1]^2$ with $\Delta x = \Delta y$ and $N$ denotes the number of Cartesian cells in one coordinate direction, i.e., $N = \frac{1}{\Delta x}$. The ramp starts at approximately $x_0=0.146$.
We use as test function $s_0 = 1 + \exp(-120*((x-0.49)^2+(y-0.20)^2))$ so that the peak of the Gaussian aligns with where we switch the scheme. We choose the velocity field parallel to the angle $\beta$, i.e., $\gamma=\beta$, and set
$(u,v)^T=(2,2\tan(\beta))$. We use $T=0.15$. The initial data and the solution at the final time is shown in figure
\ref{Fig: test function Test Cart}.
We use $\nu=0.8$. The time step $\Delta t$ for the mixed schemes involving MUSCL and MUSCLmod is computed using \eqref{eq: compute dt MUSCL}, for the mixed scheme involving MPRKC we use \eqref{eq: compute dt MPRKC}. 
We compare MUSCL-Trap, MUSCLmod-Trap, and MPRKC-Trap. We also include results using the explicit versions of the scheme everywhere (i.e., in that case we do \textit{not} switch to an implicit scheme).

In figure \ref{Fig: Test Cart 1 step error} we show the error after taking one time step, both in the $L^1$ norm and in the $L^{\infty}$ norm. As expected, all fully explicit versions show a third-order one step error. For the mixed schemes, both new versions (MUSCLmod-Trap and MPRKC-Trap) show third order in $L^{\infty}$
whereas MUSCL-Trap only converges with second order.

In figure \ref{Fig: Test Cart time T error} we show the error at time $T$. We observe second-order convergence for the $L^1$ norm for all 6 schemes. The error for MUSCL is smallest, followed by MUSCL-Trap. For the error in the $L^\infty$ norm, all schemes except for MUSCL-Trap show second order. MUSCL-Trap converges on the finest meshes with order 1.5. So there is some error accumulation from the second-order one step error but we only seem to lose half an order. We have observed an order of 1.5 for MUSCL-Trap on Cartesian meshes in several tests. In terms of absolute error sizes, we observe the smallest errors for MUSCL, MUSCLmod, and MUSCLmod-Trap, with them being essentially identical on the finest mesh.

\begin{figure}
  \centering
  \includegraphics[scale = 0.6]{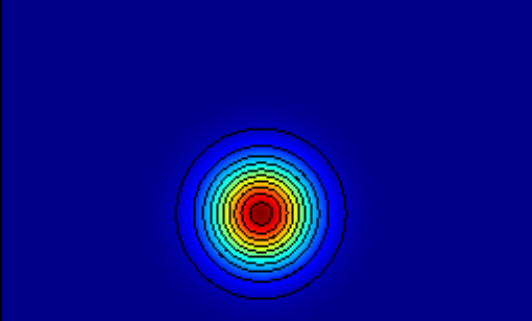}
  \includegraphics[scale = 0.25]{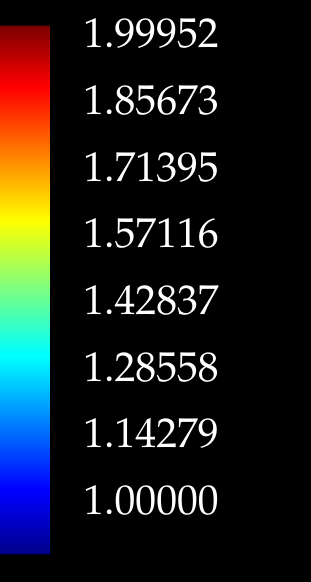}
    \includegraphics[scale = 0.6]{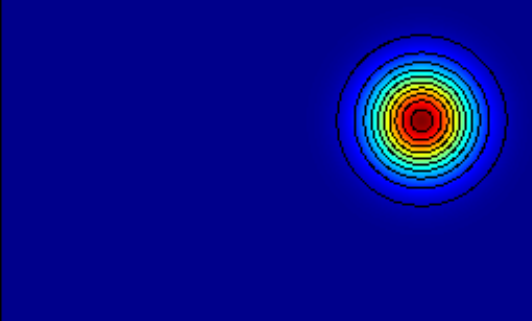}
\caption{Setup \textbf{Test \TestCart}: Initial data (\textit{left}) and solution at $T=0.15$ (\textit{right}) for MUSCLmod-Trap on a 256$\times$256 mesh. (Only the zoom on the domain $[0,1]\times[0,0.6]$ is shown.)}
 \label{Fig: test function Test Cart}
\end{figure}

\begin{figure}
  \centering
  \includegraphics[scale = 0.4]{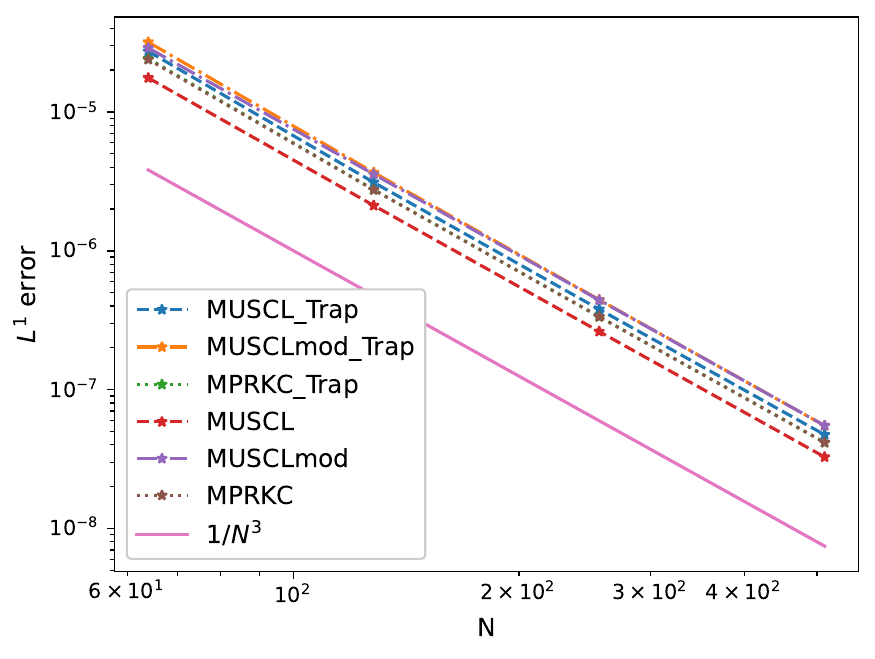}
  \includegraphics[scale = 0.4]{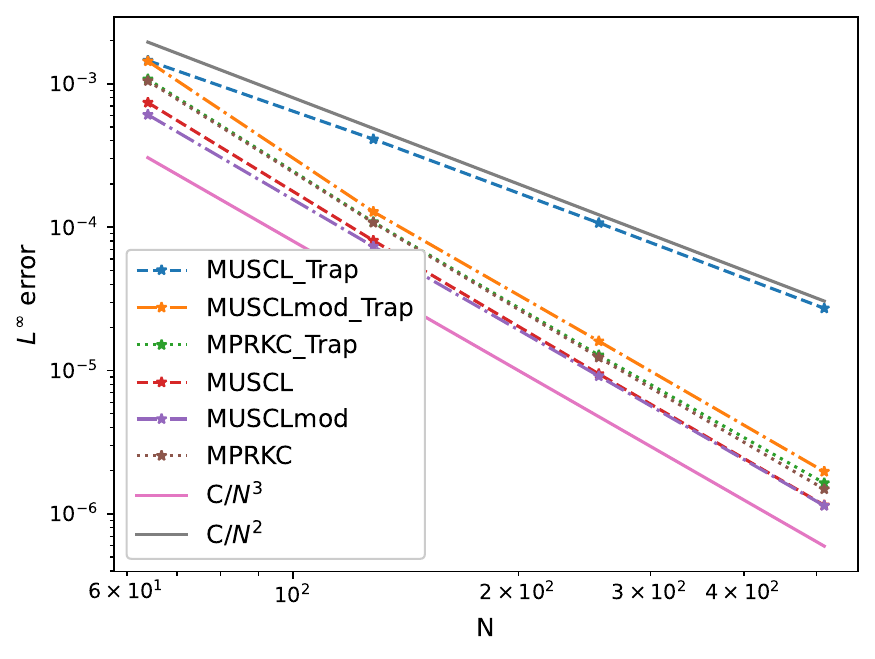}
\caption{Results for \textbf{Test \TestCart}: Error after 1 time step. \textit{Left:} $L^1$ error. \textit{Right:} $L^\infty$ error.}
 \label{Fig: Test Cart 1 step error}
\end{figure}
\begin{figure}
  \centering
  \includegraphics[scale = 0.4]{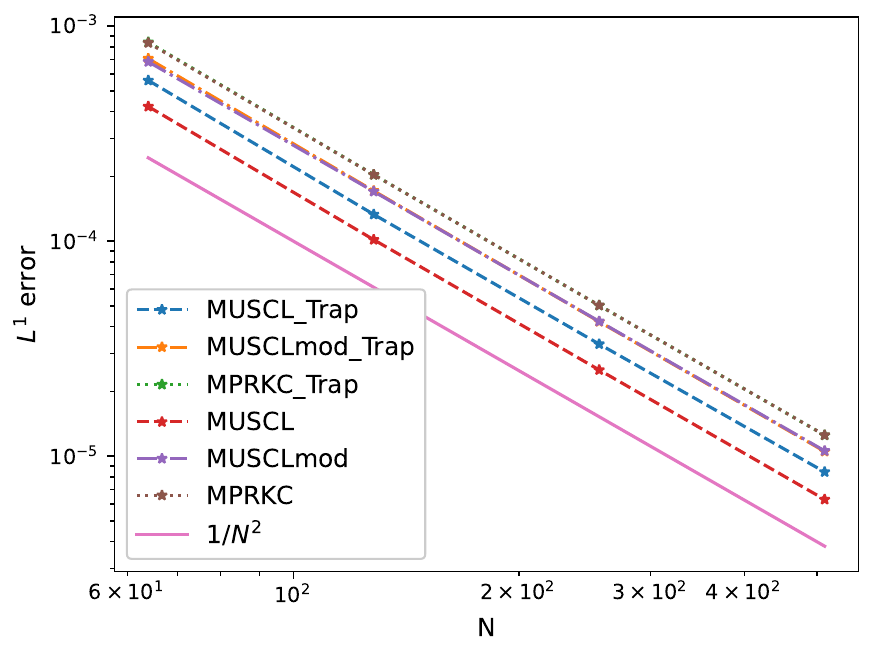}
  \includegraphics[scale = 0.4]{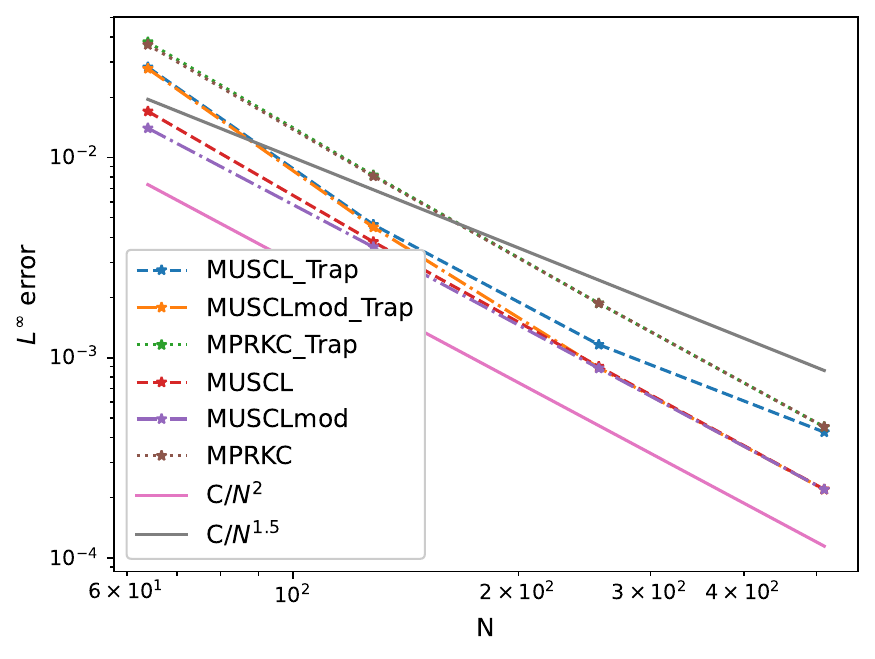}
\caption{Results for \textbf{Test \TestCart}: Error at time $T$. \textit{Left:} $L^1$ error. \textit{Right:} $L^\infty$ error.}
 \label{Fig: Test Cart time T error}
\end{figure}

\FloatBarrier

\subsubsection{Test on cut cell mesh}

Our discussion of the schemes in 2d so far has focused on the transition error between the explicit and implicit
schemes in the absence of cut cells. The goal was to examine and eliminate error source (1) separately from error sources
(2)(a)-(c). We now add cut cells again and 
conclude this contribution with a numerical comparison involving the following three schemes:
MUSCL-Trap, MUSCLmod-Trap, MPRKC-Trap. The goal is to examine whether the reduction of the transition error leads to more accurate results and better convergence orders for the cut cell situation as well.

\textbf{Test \Testcut}: We consider the ramp setup shown in figure \ref{Fig: switching scheme in 2d}. We again choose $\gamma=\beta$. We will consider 4 different angles for this test: 10$^\circ$, 20$^\circ$, 30$^\circ$, and 40$^\circ$. The setup is similar to \textbf{Test \TestCart} with the main difference being that we now have cut cells along the ramp. The Cartesian mesh is chosen to cover $[0,1]^2$ with $\Delta x = \Delta y$. We use as test function again $s_0 = 1 + \exp(-120*((x-x_0)^2+(y-y_0)^2))$. The starting point of the ramp and the centering of the test function  $(x_0,y_0)$ varies for the different angles. They are chosen in such a way that at the initial time, the cut cells are located around the peak, see, e.g., figure \ref{Fig: Test cut time T L1} for the solution at the final time for ramp angle $30^{\circ}$.

Besides the accuracy of the 3 different schemes, we also want to examine the effect of the inaccurate slope reconstruction on cut cells and transition cells. Typically, we use a least squares fit to construct gradients on cut cells and transition cells, see, e.g., \cite{May_Berger_LP}, which is only first-order accurate. As an easy way of obtaining slopes with higher accuracy, we will also include results that use analytic slopes on cut cells and transition cells (together with standard second-order slope reconstruction on Cartesian cells that are updated fully explicitly). These will be marked with `ana' in the legend whereas `LS' implies that the least squares slope reconstruction has been used.

The tests will focus on comparing the accuracy of the different schemes. In terms of cost, for all schemes we need to solve implicit systems of size
$\bigO(N)$. The costs for MUSCL-Trap and MUSCLmod-Trap are pretty comparable. MPRKC-Trap is somewhat more expensive due to applying the explicit scheme twice, the potentially reduced time step length caused by the different CFL conditions, and the extended implicit zone, which is also reflected in somewhat longer running times.

\begin{figure}
  \centering
    \includegraphics[scale = 0.46]{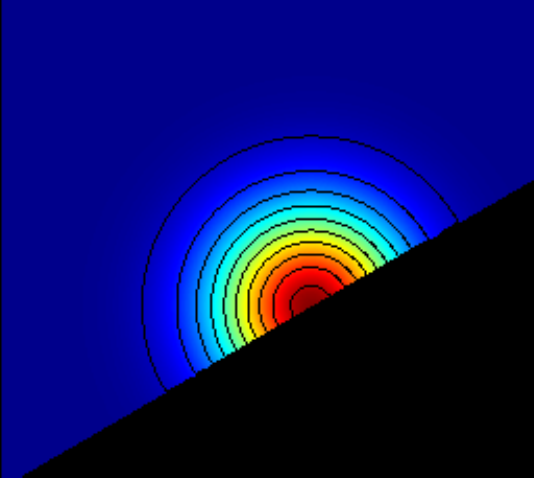}\hspace*{0.2cm}
  \includegraphics[scale = 0.38]{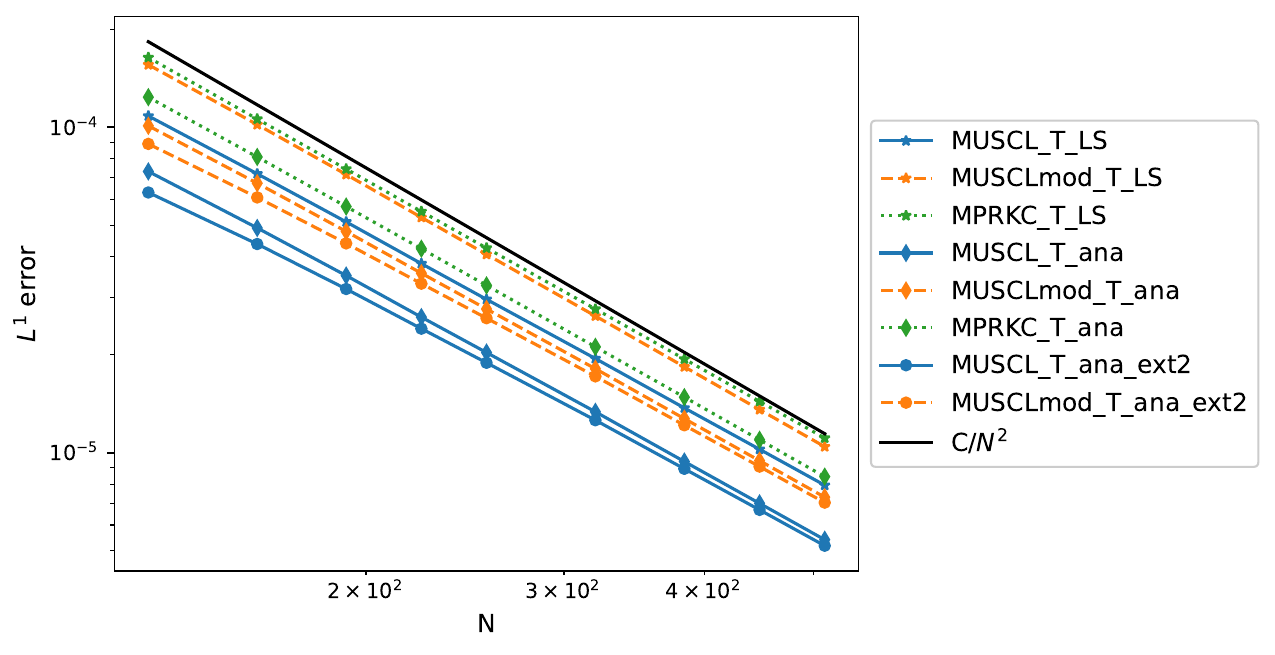}
\caption{Results for \textbf{Test \Testcut} for $30^{\circ}$ ramp: \textit{Left:} Zoom on the solution at time $T$ using MUSCL-Trap for $N=512$. \textit{Right:} $L^1$ error for various schemes.}
 \label{Fig: Test cut time T L1}
\end{figure}

In figure \ref{Fig: Test cut time T L1} we show the results for the error in the $L^1$ norm for the $30^{\circ}$ ramp. The results for the other angles are very comparable. We observe second order convergence for all schemes. Generally, using the original MUSCL scheme as explicit scheme leads to slightly smaller $L^1$ errors for this test.
We have also included results for versions of MUSCL-Trap and MUSCLmod-Trap with an extended implicit zone, where the implicit zone has been extended by 2 cells. These are marked as `ext2'. As a result the set of implicitly treated cells is now the same as for MPRKC-Trap.

  In figure \ref{Fig: Test cut time T Linf} we present the results for the error in the $L^{\infty}$ norm for all 4 angles. Recall that the test function was chosen such that we expect the bigger errors on cut cells, which is actually the case. It is very common for errors on cut cells in the $L^{\infty}$ norm to show a \textit{zig-zag} behavior. In table \ref{Table: 2d test cut LS slopes} we therefore also present slopes that we get from fitting straight lines through the data shown in figure \ref{Fig: Test cut time T Linf} by means of a least squares approach. 
While the details vary a bit for the different ramp angles, we can generally observe that
\begin{itemize}
\item When using the LS slopes, the 3 different schemes (MUSCL-Trap, MUSCLmod-Trap, and MPRKC-Trap) show similar absolute errors and also similar convergence orders (with the exception of the $40^{\circ}$ ramp). Having improved the transition error does \textit{not} result in smaller errors overall for this test.
\item Using analytic slopes reduces the error (in terms of its size) significantly, with factors varying between 4 and 10.
\item For MPRKC-Trap, using analytic slopes instead of LS slopes improves the convergence orders, within a range of 0.15 to 0.42.
\item For MUSCL-Trap and MUSCLmod-Trap, we do not see significant improvement in convergence orders when using analytic slopes instead of LS slopes. For extending the implicit zone by 2 cell layers, we do see significant improvement for the angles $10^{\circ}$ and $20^{\circ}$ degrees but not for the higher ones.
  \item We observe the highest convergence orders for MPRKC-Trap with analytic slopes. In terms of errors sizes it is very comparable though with MUSCL-Trap and MUSCLmod-Trap when using analytic slopes and an extended implicit zone.
  \end{itemize}
  To sum it up: despite the improved transition error, we do \textit{not} see a significantly  improved convergence order or significantly smaller errors for the new versions MUSCLmod-Trap and MPRKC-Trap. Using analytic slopes leads to significantly smaller error sizes but not necessarily significantly improved convergence orders for this test.

\begin{figure}
  \centering
  \includegraphics[scale = 0.39]{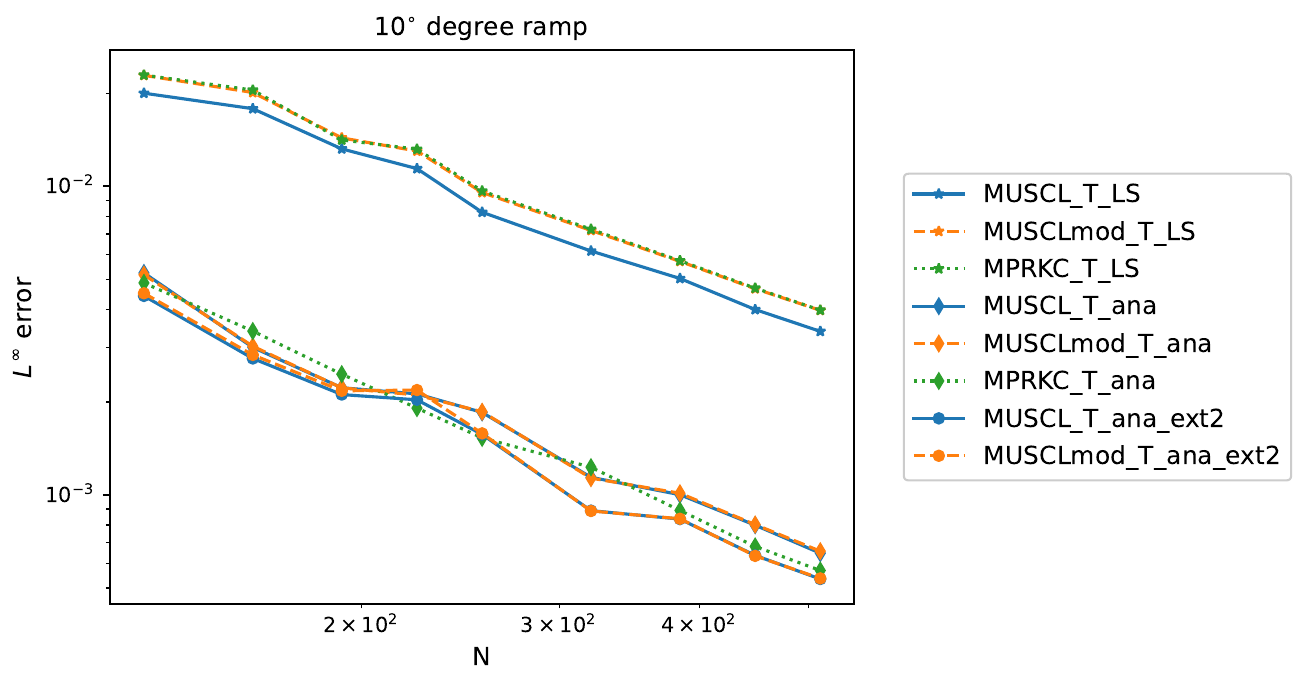}\hspace*{-0.15cm}
  \includegraphics[scale = 0.39]{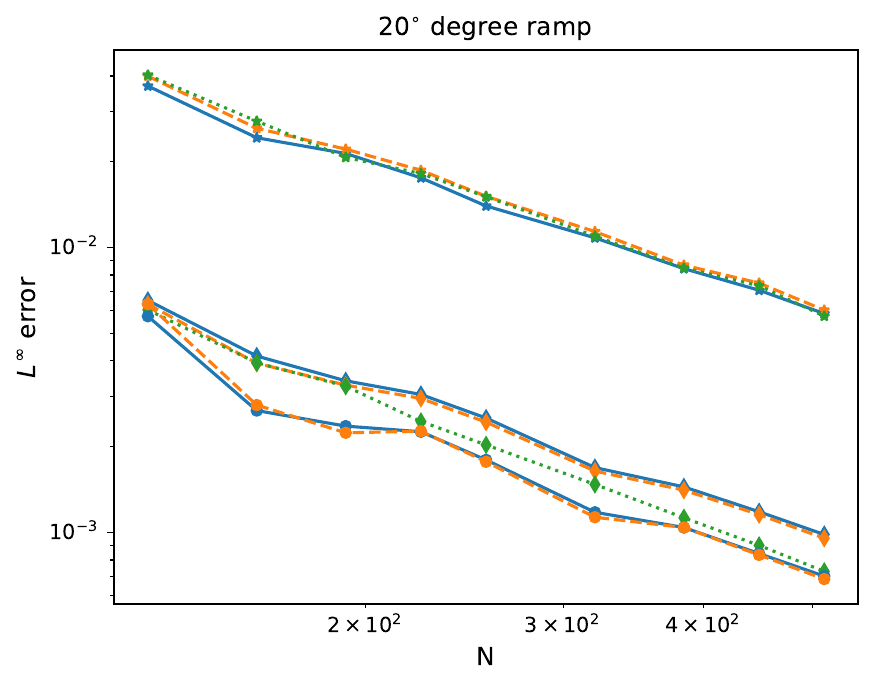}
  \includegraphics[scale = 0.39]{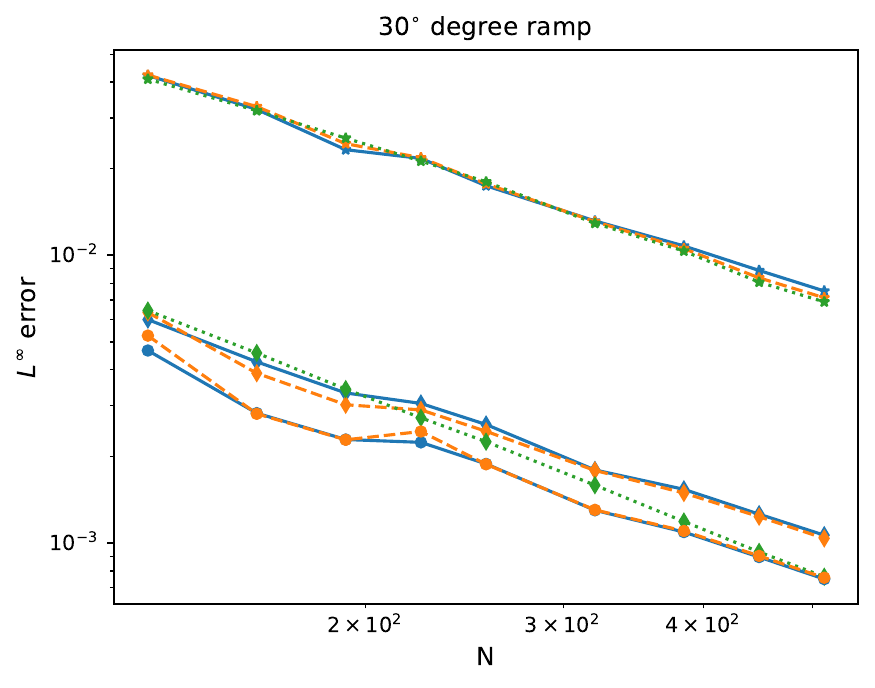}
    \includegraphics[scale = 0.39]{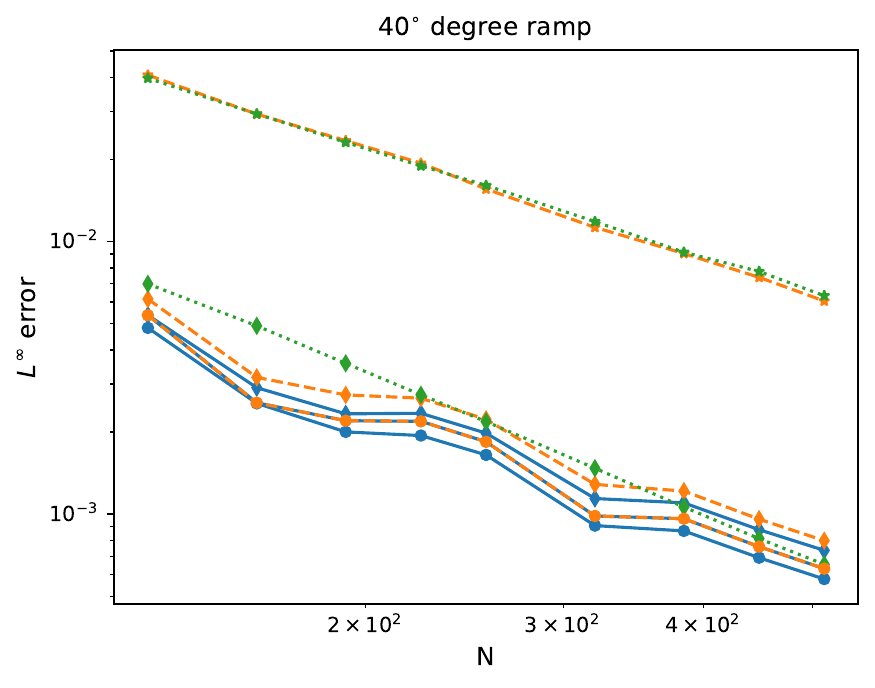}
    \caption{Results for \textbf{Test \Testcut}: Error in $L^{\infty}$ norm. Convergence orders are given in table \ref{Table: 2d test cut LS slopes}.}
 \label{Fig: Test cut time T Linf}
\end{figure}

\begin{table}[b]
  \begin{center}
    {\small
    \caption{Result of \textbf{Test \Testcut}: Convergence orders for errors in $L^{\infty}$ norm, computed as a least squares fit for data shown in figure \ref{Fig: Test cut time T Linf}}
\label{Table: 2d test cut LS slopes}
\begin{tabular}{crcccccc}
\hline 
Method & $10^{\circ}$ & $20^{\circ}$ & $30^{\circ}$ & $40^{\circ}$ \\
\hline
\\[-0.2cm]
MUSCL\_Trap\_LS          & 1.36  & 1.29  & 1.23  & 1.42\\
MUSCLmod\_Trap\_LS  & 1.33  & 1.32 & 1.29 & 1.37\\
MPRKC\_Trap\_LS       & 1.33   & 1.35 & 1.30 & 1.31 \\
MUSCL\_Trap\_ana     & 1.41  & 1.32 & 1.22 & 1.34 \\
MUSCLmod\_Trap\_ana & 1.39 & 1.31 & 1.22 & 1.37 \\
MPRKC\_Trap\_ana    & 1.53  & 1.50 & 1.54 & 1.73 \\
MUSCL\_Trap\_ana2   & 1.50  & 1.37 & 1.24 & 1.44 \\
MUSCLmod\_Trap\_ana2 & 1.53 & 1.43 & 1.28 & 1.42 \\
\hline
\end{tabular}  }   
\end{center}
\end{table}

\FloatBarrier

\section{Conclusions and Future Plans}\label{sec: outlook}

In this contribution, we analyzed the accuracy of the mixed explicit implicit scheme consisting of MUSCL as explicit scheme and Trapezoidal rule with slope reconstruction as implicit scheme. For the one step error, we identified several second-order error sources linked to the irregular size of the cut cells as well as a second-order transition error when switching from explicit to implicit schemes. For linear advection in 1d, we can show that these errors do not accumulate in the usual way and that the resulting scheme is second-order accurate.

This is not the case in 2d. We therefore introduced two new mixed schemes, MUSCLmod-Trap and MPRKC-Trap, with improved transition errors that are based on exchanging the explicit scheme. When using the mixed scheme on a fully Cartesian mesh, this led to improved convergence orders in 2d. When using these new schemes on a test involving cut cells however there was no significant difference to using the original MUSCL-Trap. Using analytic slopes led to a slight improvement of convergence orders and a significant improvement in the actual error size. Note though that the improved convergence orders are not that different from the 
newer results with DG codes on cut cell meshes for piecewise \textit{linear} polynomials, which show reduced convergence orders in
the $L^{\infty}$ norm of 1.5 to 1.6 \cite{DoD_SIAM_2020,Giuliani_DG} as well. So maybe it is to much to aim for full second order in $L^{\infty}$ for the ramp test. 

More intensive numerical tests are needed to make a definite statement. It currently seems that it might only pay off to reduce the transition error if also the other errors sources (caused by the irregularity of the cut cells) are taken care of. The first step would be to upgrade the slope reconstruction on cut cells and neighbors of cut cells to second-order. This can be achieved by fitting quadratic polynomials. Our initial attempts of implementing this have led to irritating results. It is currently not clear whether there is a bug in the implementation or whether there is some kind of weird interaction going on with the solves necessary for the implicit time stepping. We currently use an approximate Newton scheme for that with the approximate Jacobian being based on a first-order discretization.



\appendix

\subsection*{Conflict of interest}
On behalf of all authors, the corresponding author states that there is no conflict of interest.

\bibliographystyle{plain}
\bibliography{biblio.bib}
\end{document}